\newcommand*\patchAmsMathEnvironmentForLineno[1]{%
  \expandafter\let\csname old#1\expandafter\endcsname\csname #1\endcsname
  \expandafter\let\csname oldend#1\expandafter\endcsname\csname end#1\endcsname
  \renewenvironment{#1}%
     {\linenomath\csname old#1\endcsname}%
     {\csname oldend#1\endcsname\endlinenomath}}% 
\newcommand*\patchBothAmsMathEnvironmentsForLineno[1]{%
  \patchAmsMathEnvironmentForLineno{#1}%
  \patchAmsMathEnvironmentForLineno{#1*}}%
\newcommand{\N}{\mathbb{N}}
\newcommand{\G}{\mathcal{G}}
\newcommand{\X}{\mathcal{X}}
\numberwithin{equation}{section}
\theoremstyle{plain}
\newtheorem{theorem}{Theorem}%[section]
\newtheorem{claim}[theorem]{Claim}
\newtheorem{corollary}[theorem]{Corollary}
\newtheorem{observation}[theorem]{Observation}
\theoremstyle{definition}
\newtheorem{problem}{Problem}
\theoremstyle{remark}
\newcommand{\OURURL}{\url{https://arxiv.org/abs/1610.04499}}
\title{Ore and Chv\'{a}tal-type Degree Conditions for Bootstrap Percolation from Small Sets}
\author{Michael Dairyko$^{1,4,5}$, Michael Ferrara$^{2,4,7}$, Bernard Lidick\'y$^{1,4,6}$,\\ Ryan R.~Martin$^{1,4,8}$, Florian Pfender$^{2,4,6,9}$ and Andrew J.~Uzzell$^{3,4}$}
\date{\today}
\begin{document}

\maketitle

\begin{abstract}
Bootstrap percolation is a deterministic cellular automaton in which vertices of a graph~$G$ begin in one of two states, ``dormant'' or ``active''.  Given a fixed positive integer $r$, a dormant vertex becomes active if at any stage it has at least $r$ active neighbors, and it remains active for the duration of the process.  Given an initial set of active vertices $A$, we say that $G$ $r$-percolates (from $A$) if every vertex in $G$ becomes active after some number of steps.  Let $m(G,r)$ denote the minimum size of a set $A$ such that $G$ $r$-percolates from $A$.  

Bootstrap percolation has been studied in a number of settings, and has applications to both statistical physics and discrete epidemiology.  Here, we are concerned with degree-based density conditions that ensure $m(G,2)=2$.  In particular, we give an Ore-type degree sum result that states that if a graph $G$ satisfies $\sigma_2(G)\ge n-2$, then either $m(G,2)=2$ or $G$ is in one of a small number of classes of exceptional graphs.  (Here, $\sigma_2(G)$ is the minimum sum of degrees of two non-adjacent vertices in $G$.) We also give a Chv\'{a}tal-type degree condition:  If $G$ is a graph with degree sequence $d_1\le d_2\le\dots\le d_n$ such that $d_i \geq i+1$ or $d_{n-i} \geq n-i-1$ for all $1 \leq i < \frac{n}{2}$, then $m(G,2)=2$ or $G$ falls into one of several specific exceptional classes of graphs.  Both of these results are inspired by, and extend, an Ore-type result in [D. Freund, M. Poloczek, and D. Reichman, Contagious sets in dense graphs, {\it European J. Combin.}~\textbf{68} 2018].
\end{abstract}

\footnotetext[1]{Department of Mathematics, Iowa State University, Ames, IA.  {\tt \{mdairyko,lidicky,rymartin\}@iastate.edu}.}
\footnotetext[2]{Department of Mathematical and Statistical Sciences, University of Colorado Denver, Denver, CO. {\tt\{michael.ferrara,florian.pfender\}@ucdenver.edu.}}
\footnotetext[3]{Department of Mathematics and Computer Science, College of the Holy Cross. {\tt auzzell@holycross.edu}.}
\footnotetext[4]{Research supported in part by NSF-DMS Grants \#1604458, \#1604773, \#1604697 and \#1603823, ``Collaborative Research: Rocky Mountain - Great Plains Graduate Research Workshops in Combinatorics''.}
\footnotetext[5]{Research supported in part by a PI conference grant from the Institute of Mathematics and its Applications.}
\footnotetext[6]{Research supported in part by NSF grants DMS-1600390 and DMS-1855653 (to B. Lidick\'{y}) and DMS-1600483 and DMS-1855622 (to F. Pfender).}
\footnotetext[7]{Research supported in part by Simons Foundation Collaboration Grants (\#206692 and \#426971, to M. Ferrara).}
\footnotetext[8]{Research supported in part by Simons Foundation Collaboration Grant (\#353292, to R.R. Martin).}
\footnotetext[9]{Research supported in part by Simons Foundation Collaboration Grant (\#276726, to F. Pfender).}

\section{Introduction}

Bootstrap percolation, also known as the {\em irreversible $r$-threshold process} \cite{DreyerRoberts, Roberts} or the \emph{target set selection} is a deterministic cellular automaton first introduced by Chalupa, Leath, and Reich \cite{CLR}. Vertices of a graph are in one of two states, ``dormant'' or ``active.''  Given an integer $r$, a dormant vertex becomes active only if it is adjacent to at least $r$ active vertices. Once a vertex is activated, it remains in that state for the remainder of the process. 

More formally, consider a graph $G$ and let $A$ denote the initial set of active vertices. For a fixed $r \in \N$, the {\em $r$-neighbor bootstrap percolation} process on $G$ occurs recursively by setting $A = A_0$ and for each time step $t\ge 0$,  \[ A_t = A_{t-1} \cup \{ v \in V(G) : | A_{t-1} \cap N(v) | \ge r\},\] where $N(v)$ denotes the neighborhood of the vertex $v$. If all of the vertices of $G$ eventually become active, regardless of order, then we say that $A$ is {\em $r$-contagious} or that $G$ {\em $r$-percolates from $A$}. Given $G$ and $r$, let $m(G,r)$ denote the minimum size of an $r$-contagious set in $G$.   (Observe that $m(G, r) \geq \min\{r, |V(G)|\}$.)
 
Originally, bootstrap percolation was studied on lattices by statistical physicists as a model of ferromagnetism \cite{CLR}, and it can also be viewed as a model of discrete epidemiology, wherein a virus or other contagion is being transmitted across a network (cf.~\cite{BaloghPete,Roberts}).  (In the latter context, each vertex is either ``infected'' or ``uninfected''.)  Further applications include the spread of influence in social networks \cite{Chen, KKT} and market stability in finance \cite{ACM}.  

Much attention has been devoted to examining percolation in a probabilistic setting, referred to in~\cite{BaloghPete} as the \textit{random disease problem}. In this setting, the initial activated set $A$ is selected according to some probability distribution.  The parameter of interest is then the probability that $G$ $r$-percolates from $A$, and in particular determining the threshold probability $p$ for which $G$ almost surely does (or does not) $r$-percolate when vertices are placed in $A$ independently with probability~$p$.  Results have been obtained in this setting for a number of families of graphs, including random regular graphs \cite{BaloghPittel}, the Erd\H{o}s--R\'enyi random graph~$G_{n,p}$ \cite{FKR,JLTV}, expanders~\cite{COFKR15}, hypercubes \cite{BB:hypercube}, trees \cite{BaloghPeresPete}, and grids \cite{AL,BBDCM,BBM:high}.
A degree sequence bound was obtained in~\cite{Reichman12}.

In addition, there has recently been interest in extremal problems concerning bootstrap percolation in various families of graphs~\cite{BenevidesPrzykucki15,Przykucki,Riedl:tree}.

The problem has also been studied from the point of view of computational complexity.  
For $r \geq 3$, determining $m(G, r)$ is NP-complete~\cite{DreyerRoberts}, and determining $m(G, 2)$ is NP-complete even for graphs with maximum degree $4$~\cite{Centeno2011,irrbernard}.  
%Furthermore, it is computationally difficult to approximate $m(G, r)$ to within a polylogarithmic factor unless NP has quasi-polynomial algorithms~\cite{Chen}.
Furthermore, it is computationally difficult to approximate $m(G, r)$~\cite{Chen,charikar_et_al:LIPIcs:2016:6627}.
Notice that $m(G,1)$ is always equal to the number of connected components of~$G$.

\subsection{Degree-Based Results}

In this paper, we are interested in degree-based density conditions that ensure that a graph $G$ will percolate from a small set of initially activated vertices.  
Let $\delta(G)$ denote the minimum degree of $G$.
Freund, Poloczek, and Reichman~\cite{FPR} showed that for each $r \geq 2$, if $G$ has order~$n$ and $\delta(G) \geq \frac{r-1}{r}n$, then $m(G, r) = r$.  Note that when $r = 2$, this is the same as Dirac's condition for hamiltonicity~\cite{D}.  Recently, Gunderson~\cite{Gunderson} showed that if $n \geq 30$ and $\delta(G) \geq \lfloor n/2 \rfloor + 1$, then $m(G, 3) = 3$, and that for each $r \geq 4$, if $n$ is sufficiently large and $\delta(G) \geq \lfloor n/2 \rfloor + r - 3$, then $m(G, r) = r$.  Moreover, both bounds are sharp.

Let $\sigma_2(G)$ denote the minimum degree sum of a pair of nonadjacent vertices in a graph $G$, i.e.
\[
\sigma_2(G) = \min\{d(x) + d(y) : xy \notin E(G)\}.
\]
  Ore~\cite{O} proved that every graph $G$ of order~$n\ge 3$ that satisfies $\sigma_2(G)\ge n$ is hamiltonian.  Freund, Poloczek, and Reichman \cite{FPR} also showed that Ore's condition is sufficient to ensure that a graph 2-percolates from the smallest possible initially activated set.

\begin{theorem}[\cite{FPR}]\label{th:FPROre}
Let $n \geq 2$.  If $G$ is a graph of order~$n$ and $\sigma_2(G) \geq n$, then $m(G, 2) = 2$.
\end{theorem}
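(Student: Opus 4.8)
The plan is to exhibit a single pair of vertices from which $G$ $2$-percolates. First I would dispose of the degenerate cases: if $n=2$ take $A=V(G)$, and if $G$ is complete then any two vertices percolate, since every remaining vertex is adjacent to both of them. Henceforth assume $G$ is noncomplete and $n\ge 3$; in particular Ore's theorem guarantees that $G$ is Hamiltonian, hence $2$-connected, a fact I expect to use at the end.

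The engine of the argument is a structural description of a \emph{failed} percolation. If $A\subseteq V(G)$ does not percolate, let $A^{\ast}$ denote the terminal active set and $B=V(G)\setminus A^{\ast}\neq\emptyset$. By definition of the process, every $v\in B$ has at most one neighbor in $A^{\ast}$; consequently each $v\in B$ has at least $d(v)-1$ neighbors inside $B$, which forces $d(v)\le\abs{B}$ for all $v\in B$. This single inequality, played off against $\sigma_2(G)\ge n$, is the heart of the matter.

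For the starting set I would take $A=\{x,y\}$ to be a nonadjacent pair maximizing $d(x)+d(y)$. Since $xy\notin E(G)$ we have $d(x)+d(y)\ge n$, so by inclusion--exclusion $x$ and $y$ have at least $d(x)+d(y)-(n-2)\ge 2$ common neighbors, which activate immediately and get the process started. Assume for contradiction that this $A$ fails, with terminal set $A^{\ast}$ and obstruction $B$. The key dichotomy is on $\abs{B}$: if $\abs{B}<n/2$, then every $v\in B$ satisfies $d(v)\le\abs{B}<n/2$, so each non-neighbor $w$ of $v$ obeys $d(w)\ge n-d(v)>\abs{B}$ by $\sigma_2$; since no vertex of $B$ can have degree exceeding $\abs{B}$, the vertex $v$ has no non-neighbor inside $B$, i.e.\ $B$ is a clique in which each vertex sends at most one edge to $A^{\ast}$.

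The remaining work, and what I expect to be the main obstacle, is to rule out both surviving configurations: a clique $B$ that is almost entirely internal, and a large obstruction with $\abs{B}\ge n/2$ (the regime realized by the near-extremal graphs such as $K_{n/2,n/2}$). Here I would invoke $2$-connectivity: because at most $\abs{B}$ edges cross between $B$ and $A^{\ast}$ and each vertex of $B$ is incident to at most one of them, the crossing edges must meet at least two distinct vertices of $A^{\ast}$, for otherwise that single vertex would be a cut vertex. Combining this with the degree-sum condition applied to the many nonedges between $B$ and $A^{\ast}$ (each forcing some vertex of $A^{\ast}$ to have degree at least $n-\abs{B}$) and with the maximality of $d(x)+d(y)$, I would derive the contradiction that closes the proof. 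The delicate point is the balanced case $\abs{B}\approx n/2$, where every inequality used is essentially tight, so the endgame must exploit the precise incidence structure of the crossing edges rather than crude degree counting.
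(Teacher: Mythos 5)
The paper does not actually prove this statement---it is quoted from [FPR]---so there is no internal proof to compare against; I will judge your argument on its own terms. Your preliminary observations are all correct: the reduction to noncomplete $G$ with $n\ge 3$, the fact that every vertex outside the terminal active set $A^{\ast}$ has at most one neighbor in $A^{\ast}$ and hence degree at most $\abs{B}$, the count giving at least two common neighbors of a nonadjacent pair with degree sum at least $n$, and the deduction that $B$ is a clique when $\abs{B}<n/2$. But the proof stops exactly where the theorem lives. Neither terminal configuration is actually ruled out: for $\abs{B}<n/2$ you announce that combining $2$-connectivity, the degree-sum condition, and ``the maximality of $d(x)+d(y)$'' will yield a contradiction without exhibiting the computation, and for $\abs{B}\ge n/2$ --- which you correctly identify as the tight regime (and which is realized, e.g., when $B$ fails to be a clique) --- you give no argument at all. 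A sketch that ends with ``I would derive the contradiction that closes the proof'' at the precise point where every inequality is tight is a gap, not a proof.

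The deeper problem is strategic. By committing to the specific starting pair $\{x,y\}$ that maximizes $d(x)+d(y)$ over nonedges, you forfeit the tool that makes arguments of this kind close: taking $I=A^{\ast}$ to be a set of \emph{maximum size} among all sets activatable from two vertices, so that exhibiting any pair that activates more than $\abs{I}$ vertices is an immediate contradiction. That device is what the paper itself uses throughout the proofs of Theorems~\ref{th:bestOre} and~\ref{thm:chvatalsharp} (Observation~\ref{oneneighbor} onward), and it is also what rescues your endgame: in your clique-$B$ case, $2$-connectivity gives $b_1,b_2\in B$ with distinct neighbors $a_1,a_2\in A^{\ast}$, and activating $\{a_1,b_2\}$ swallows all of $B$ plus $a_1$ --- but with your setup this only shows that a \emph{different} pair does better, which proves nothing about $\{x,y\}$ and is not a contradiction with your hypothesis. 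Likewise, when $\abs{B}\ge n/2$ and $B$ is not a clique, the natural move is to apply the theorem inductively to $G[B]$ (using $d_B(u)+d_B(v)\ge n-2\ge\abs{B}$) and then compare $\abs{B}$ with $\abs{A^{\ast}}$, which again requires $A^{\ast}$ to have been chosen maximum. I recommend restarting from ``let $I$ be a largest activatable set'' and inducting on $n$; the skeleton you have built transfers almost verbatim and the contradictions then actually materialize.
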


Recently, Wesolek~\cite{Wesolek} proved a bound on $\sigma_2(G)$ that implies that $m(G, r) = r$ for all $r \geq 3$.

%As noted in \cite{FPR}, Theorem \ref{th:FPROre} immediately implies an analogue to Dirac's Theorem \cite{D}, in that a graph $G$ of order $n$ with minimum degree at least $\frac{n}{2}$ must also satisfy $m(G,2)=2$.
Note that hamiltonicity alone is not sufficient to conclude that a graph~$G$ satisfies $m(G, 2) = 2$, as $m(C_n,2) = \lceil n/2 \rceil$, which tends to infinity with $n$.  Rather, Theorem \ref{th:FPROre} is part of a diverse collection of results that demonstrate that many sufficient density conditions for hamiltonicity imply a much richer structure that allows for stronger conclusions (cf.~\cite{Bondy:metaconjecture,BCFGL}).  

In this paper, we improve Theorem~\ref{th:FPROre} in several ways.  First, we characterize graphs of order $n$ with $\sigma_2\ge n-2$ and $m(G,2)>2$.  
These will consist of four infinite families of graphs $\G_0$, $\G_1$, $\G_2$, $\G_3$ and a finite set of graphs~$\X$.
The graphs in $\X$ are depicted in Figure \ref{fig:extremal}.

The class $\G_0$  consists of all graphs which are unions of two disjoint non-empty cliques $X$,~$Y$.
Note that $X$ and $Y$ can be of different sizes.
Graphs in $\G_1$, $\G_2$ and $\G_3$ are formed from $\G_0$ by selecting $\{x,x'\}\subseteq X$, $\{y,y'\}\subseteq Y$, adding the edges $xy$, $x'y'$, and deleting the edges $xx'$ and $yy'$ if they exist (see Figure~\ref{fig:Gi}).  For simplicity, we have distinguished the cases where $x=x'$ and $y=y'$ ($\G_1$), $x\ne x'$ and $y\ne y'$ ($\G_2$) and  $x=x'$ and $y\ne y'$ ($\G_3$).  It is easy to see that any graph $G\in \G_0\cup\G_1\cup\G_2\cup\G_3$ containing at least one vertex in each of $X$ and $Y$ that is not adjacent to any vertex in the other set has $\sigma_2(G)=|V(G)|-2$ and $m(G,2)>2$.

\begin{figure}
\begin{center}
\[ \begin{array}{cccccccc}
\begin{tikzpicture}[scale=1]
\foreach \a in {5}{
\node[regular polygon, regular polygon sides=\a, minimum size=2cm, draw] at (\a*4,0) (A) {};
\foreach \i in {1,...,\a}
    \node[circle,fill=black,scale=.5] at (A.corner \i) {};
}
\end{tikzpicture}

&

\begin{tikzpicture}[scale=1]
\foreach \a in {5}{
\node[regular polygon, regular polygon sides=\a, minimum size=2cm, draw] at (0,0) (A) {};
\foreach \i in {1,...,\a}
    \node[circle,fill=black,scale=.5](y\i) at (A.corner \i) {};
}
  \node[circle,fill=black,scale=.5] (a) at (0, 0) {}; 

  \draw (y3) -- (a) -- (y4);
 
\end{tikzpicture}
&

\begin{tikzpicture}[scale=1.2]
  \node[circle,fill=black,scale=.5] (a) at (0,0){}; 
  \node[circle,fill=black,scale=.5] (b) at (1,0) {}; 
  \node[circle,fill=black,scale=.5] (c) at (1,.75) {}; 
  \node[circle,fill=black,scale=.5] (d) at (1, 1.5) {}; 
  \node[circle,fill=black,scale=.5] (e) at (0 , 1.5) {}; 
  \node[circle,fill=black,scale=.5] (f) at (0,.75) {}; 
  \draw (a) -- (b);
  \draw (b) -- (c);
  \draw(c) -- (d);
  \draw(d) -- (f);
  \draw(e) -- (c);  
  \draw(e) -- (f);  
  \draw(f) -- (a);  
\end{tikzpicture}

&

\begin{tikzpicture}[scale=1.2]
  \node[circle,fill=black,scale=.5] (a) at (0,0){}; 
  \node[circle,fill=black,scale=.5] (b) at (1,0) {}; 
  \node[circle,fill=black,scale=.5] (c) at (1,.75) {}; 
  \node[circle,fill=black,scale=.5] (d) at (1, 1.5) {}; 
  \node[circle,fill=black,scale=.5] (e) at (0 , 1.5) {}; 
  \node[circle,fill=black,scale=.5] (f) at (0,.75) {}; 
 
  \draw (a) -- (b);
  \draw (b) -- (c);
  \draw(c) -- (d);
  \draw(d) -- (e);
  \draw(e) -- (f);  
  \draw(f) -- (c);  
  \draw(f) -- (a);  
\end{tikzpicture}

\\[10pt]

\begin{tikzpicture}[xscale=0.8,yscale=0.9]
\foreach \a in {0,1,2}{    
    \node[circle,fill=black,scale=.5](z\a) at (1,-1+\a) {};
    \node[circle,fill=black,scale=.5](x\a) at (0,-1+\a) {};    
\draw (z\a) -- (x\a);
}
\foreach \a in {0,1}{    
    \node[circle,fill=black,scale=.5](y\a) at (-1,-0.5+\a) {};
\draw (y\a) -- (x0) (y\a) -- (x1) (y\a) -- (x2);
}
\draw (z0)--(z1)--(z2) (z0) to[bend right=50] (z2);

%\draw (x)--(zx)--(zy)--(y)--(z0)--(x)--(z3)--(y) (z0)--(z1)--(z2) (z1)--(z3) (zx)--(z2)--(zy);
\end{tikzpicture}

%\hskip 2em
%
%\begin{tikzpicture}[xscale=0.6,yscale=0.7]
%    \node[circle,fill=black,scale=.5](x) at (-2,0) {};
%    \node[circle,fill=black,scale=.5](y) at (2,0) {};
%\foreach \a in {0,2,3}{    
%    \node[circle,fill=black,scale=.5](z\a) at (0,-1.5+\a) {};
%}
%\node[circle,fill=black,scale=.5](z1) at (-0.5,-0.5) {};
%    \node[circle,fill=black,scale=.5](zy) at (1,0) {};
%    \node[circle,fill=black,scale=.5](zx) at (-1,0) {};
%\draw (x)--(zx)--(zy)--(y)--(z0)--(x)--(z3)--(y) (z0)--(z1)--(z2) (z1)--(z3) (zx)--(z2)--(zy);
%\end{tikzpicture}

&

\begin{tikzpicture}[scale=.9]
\foreach \a in {5}{
\node[regular polygon, regular polygon sides=\a, minimum size=2cm, draw] at (0,0) (A) {};
\foreach \i in {1,...,\a}
    \node[circle,fill=black,scale=.5](y\i) at (A.corner \i) {};
}
\node[circle,fill=black,scale=.5] (a) at (270+72:0.4) {}; 
\node[circle,fill=black,scale=.5] (b) at (270-72:0.4) {}; 
\node[circle,fill=black,scale=.5] (c) at (90:0.4) {}; 
\draw
(a)--(c)--(b)
(y1)--(c)
(y2)--(b)--(y3)
(y4)--(a)--(y5)
;
\end{tikzpicture}

& 

\begin{tikzpicture}[scale=.8]
\foreach \a in {4}{
\node[regular polygon, regular polygon sides=\a, minimum size=2cm, draw] at (0,0) (A) {};
\foreach \i in {1,...,\a}
    \node[circle,fill=black,scale=.5](x\i) at (A.corner \i) {};
}
\foreach \a in {4}{
\node[regular polygon, regular polygon sides=\a, minimum size=2cm, draw] at (.75,.75) (A) {};
\foreach \i in {1,...,\a}
    \node[circle,fill=black,scale=.5](y\i) at (A.corner \i) {};
}
\draw
(x1)--(y2)
(x2)--(y1)
(x3)--(y3)
(x4)--(y4)
;
\end{tikzpicture}

&

\begin{tikzpicture}[scale=.8]
\foreach \a in {4}{
\node[regular polygon, regular polygon sides=\a, minimum size=2cm, draw] at (0,0) (A) {};
\foreach \i in {1,...,\a}
    \node[circle,fill=black,scale=.5](x\i) at (A.corner \i) {};
}
\foreach \a in {4}{
\node[regular polygon, regular polygon sides=\a, minimum size=2cm, draw] at (.75,.75) (A) {};
\foreach \i in {1,...,\a}
    \node[circle,fill=black,scale=.5](y\i) at (A.corner \i) {};
}
\draw
(x1)--(y1)
(x2)--(y2)
(x3)--(y3)
(x4)--(y4)
;
\end{tikzpicture}

\end{array}\]
\caption{The family $\X$: Small exceptional graphs for Theorem \ref{th:bestOre}.}
\label{fig:extremal}
\end{center}
\end{figure}
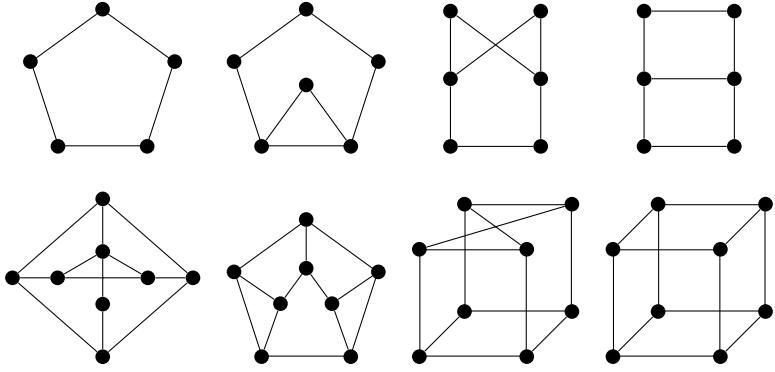

%%%%%%%%%%%%%%%%%%%%%%%%%%%%%%%%%%%%%%%%%%%%%%%%%%%%%%%%%%%%%%

\begin{figure}
\begin{center}
\tikzset{vtx/.style={inner sep=1.7pt, outer sep=0pt, circle,fill},}
\[ \begin{array}{cccccccc}

\begin{tikzpicture}[scale=.9]
\foreach \a in {4}{
\node[regular polygon, regular polygon sides=\a, minimum size=2cm, draw] at (0,0) (A) {};
\foreach \i in {1,...,\a}
    \node[circle,fill=black,scale=.5](x\i) at (A.corner \i) {};
}
\draw (x1)--(x3)--(x2)--(x4)--(x1);

\foreach \a in {6}{
\node[regular polygon, regular polygon sides=\a, minimum size=2cm, draw] at (3,0) (A) {};
\foreach \i in {1,...,\a}
    \node[circle,fill=black,scale=.5](y\i) at (A.corner \i) {};
}
\draw (y1)--(y3)--(y5)--(y2)--(y4)--(y1) (y1)--(y6)--(y2) (y3)--(y6)--(y4) (y5)--(y1);

\end{tikzpicture}
&

\begin{tikzpicture}[scale=.9]
\foreach \a in {5}{
\node[regular polygon, regular polygon sides=\a, minimum size=2cm, draw] at (0,0) (A) {};
\foreach \i in {1,...,\a}
    \node[circle,fill=black,scale=.5](x\i) at (A.corner \i) {};
}
\draw (x1)--(x3)--(x5)--(x2)--(x4)--(x1);

\foreach \a in {5}{
\node[regular polygon, regular polygon sides=\a, minimum size=2cm, draw] at (3,0) (A) {};
\foreach \i in {1,...,\a}
    \node[circle,fill=black,scale=.5](y\i) at (A.corner \i) {};
}
\draw (y1)--(y3)--(y5)--(y2)--(y4)--(y1);

\draw (x5) -- (y2);

\end{tikzpicture}  \\

\G_0  & \G_1 \\

\begin{tikzpicture}[scale=.9]
\foreach \a in {5}{
\node[regular polygon, regular polygon sides=\a, minimum size=2cm] at (0,0) (A) {};
\foreach \i in {1,...,\a}
    \node[circle,fill=black,scale=.5](x\i) at (A.corner \i) {};
}
\draw (x1)--(x3)--(x5)--(x2)--(x4)--(x1);
\draw (x1)--(x2)--(x3)--(x4)  (x5)--(x1);

\foreach \a in {5}{
\node[regular polygon, regular polygon sides=\a, minimum size=2cm] at (3,0) (A) {};
\foreach \i in {1,...,\a}
    \node[circle,fill=black,scale=.5](y\i) at (A.corner \i) {};
}
\draw (y1)--(y3)--(y5)--(y2)--(y4)--(y1);
\draw (y1)--(y2)  (y3)--(y4)--(y5)--(y1);

\draw (x5) -- (y2) (x4)--(y3);
\begin{scope}[node distance=3pt]
\node [above=of x5] {$x$};
\node [below=of y3] {$v$};
\end{scope}
\end{tikzpicture}
&

\begin{tikzpicture}[scale=.9]
\foreach \a in {5}{
\node[regular polygon, regular polygon sides=\a, minimum size=2cm, draw] at (0,0) (A) {};
\foreach \i in {1,...,\a}
    \node[circle,fill=black,scale=.5](x\i) at (A.corner \i) {};
}
\draw (x1)--(x3)--(x5)--(x2)--(x4)--(x1);

\foreach \a in {5}{
\node[regular polygon, regular polygon sides=\a, minimum size=2cm] at (3,0) (A) {};
\foreach \i in {1,...,\a}
    \node[circle,fill=black,scale=.5](y\i) at (A.corner \i) {};
}
\draw (y1)--(y3)--(y5)--(y2)--(y4)--(y1);
\draw (y1)--(y2)  (y3)--(y4)--(y5)--(y1);

\draw (x5) -- (y2) (x5)--(y3);
\begin{scope}[node distance=3pt]
\node [above=of x5] {$x$};
\node [below=of y3] {$v$};
\end{scope}

\end{tikzpicture} 
\\
\G_2 & \G_3

\end{array}\]
\end{center}

\caption{Examples of graphs in $\G_0$, $\G_1$, $\G_2$, and $\G_3$ for $n=10$.  The labeled vertices in the third and fourth graphs refer to the proof of Theorem~\ref{th:bestOre}.}

 \label{fig:Gi}
\end{figure}
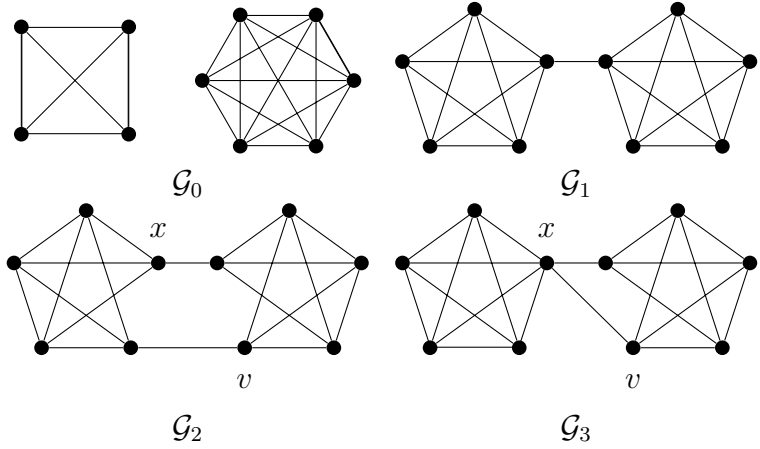

\begin{theorem}\label{th:bestOre}
Let $G$ be a graph of order~$n \geq 2$ such that $G$ is not in $\G_0$, $\G_1$, $\G_2$, $\G_3$ or $\X$.  If $\sigma_2(G) \geq n - 2$, then $m(G, 2) = 2$.
\end{theorem}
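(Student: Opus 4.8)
The plan is to prove the contrapositive: assuming $\sigma_2(G)\ge n-2$ and $m(G,2)>2$, I would show that $G$ lies in $\G_0\cup\G_1\cup\G_2\cup\G_3\cup\X$. First I would dispose of two degenerate situations. If $G$ is disconnected, any two vertices in distinct components are nonadjacent, so their degrees sum to at most $n-2$; this forces equality, and a short computation shows each component is complete and that there are exactly two of them, placing $G$ in $\G_0$. Next, any vertex of degree at most $1$ can never acquire two active neighbors, so it can never be activated and must belong to \emph{every} contagious set; using $\sigma_2(G)\ge n-2$ one checks that such a vertex forces $G-w$ to be nearly complete and pushes $G$ into $\G_0$, $\G_1$, or a small graph of $\X$. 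After these reductions I may assume $G$ is connected with $\delta(G)\ge 2$.

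For the main case I would choose, among all pairs $\{u,v\}$, one maximizing the size of the final active set $B=\langle u,v\rangle$, and set $S=V(G)\setminus B$ with $s=|S|\ge 1$. Since $B$ is closed under the percolation rule, \emph{every} $w\in S$ has at most one neighbor in $B$, whence $d(w)\le s$. Applying $\sigma_2(G)\ge n-2$ to $w$ and to each of its (at least $|B|-1$) non-neighbors in $B$ yields $d(z)\ge |B|-2$ for those $z\in B$; since the total number of edges between $S$ and $B$ is at most $s$, almost every vertex of $B$ has no neighbor in $S$, so this bound forces $G[B]$ to be a clique or a clique minus a single edge. A symmetric argument, combined with the maximality of the chosen pair (which lets me re-seed inside $S$ and compare closures), should force $G[S]$ to be a near-clique as well. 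Thus $G$ consists of two near-cliques joined by at most $s$ edges, all of which cross the cut $(B,S)$.

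The remaining and most delicate step is to pin down this bipartite structure exactly. Closedness already gives that no vertex of $S$ has two neighbors in $B$. I would then argue that if the two near-cliques were joined by three or more edges, or by two edges whose endpoints are arranged so that seeding two vertices of one side (or one crossing pair) would ignite the other side, then some well-chosen pair of seeds percolates all of $G$, contradicting $m(G,2)>2$. Eliminating these cases leaves exactly a single bridge, a matching of size two, or two edges emanating from one vertex of $B$ to two distinct vertices of $S$; moreover the one permitted missing edge inside each near-clique (the slack in $\sigma_2\ge n-2$) is forced to join precisely the two cross-endpoints, so these configurations are exactly the defining patterns of $\G_1$, $\G_2$, and $\G_3$. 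Small orders, where ``clique plus at most two edges'' degenerates or where the slack interacts with the size of the cut, produce the finitely many sporadic graphs collected in $\X$.

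The hard part will be this last step: converting the density estimates of the second paragraph into the exact extremal structure. Because $\sigma_2\ge n-2$ falls two short of the Ore bound of Theorem~\ref{th:FPROre}, each of $G[B]$ and $G[S]$ may fail to be complete by a single edge, and these two independent defects must be tracked simultaneously against the placement of the one or two cross edges. Managing the resulting case distinctions---while repeatedly exploiting the freedom to re-choose the seed pair to force a contradiction whenever the structure is even slightly richer than the extremal families---is where the bulk of the work lies, and it is also the source of the finite exceptional set $\X$.
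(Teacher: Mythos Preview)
Your setup mirrors the paper's: take a maximum closure $B$ (the paper's $I$) and set $S=V\setminus B$ (the paper's $U$).  However, your central structural claim---that the degree bound $d(z)\ge |B|-2$ forces $G[B]$ to be a clique or a clique minus a single edge---is false, and this is where the argument breaks.  The bound $d(z)\ge |B|-2$ applies only to vertices $z\in B$ that are nonadjacent to some fixed $w\in S$, and it bounds the \emph{total} degree of $z$, not $d_B(z)$.  A vertex $z\in B$ with several neighbours in $S$ can have very small degree inside $B$, and in fact this is exactly what happens in the extremal families: in $\G_2$ and $\G_3$ there is a vertex $v\in B$ with $d_B(v)=1$ but $d_S(v)=|S|-1$.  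Consequently the pair $(B,S)$ is \emph{not} the near-clique bipartition you are looking for; in the paper's analysis the two cliques turn out to be $S\cup\{v\}$ and $B\setminus\{v\}$, with one vertex migrating across the cut.  Your symmetric picture of two near-cliques joined by a few crossing edges is the correct \emph{endpoint}, but $(B,S)$ does not deliver it directly.

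The paper handles this asymmetrically.  It first establishes $|I|>n/2$ and that $G[U]$ is a genuine clique (this uses Theorem~\ref{th:FPROre} applied inside $U$, an ingredient you do not invoke), so the two sides are not on equal footing.  It then splits $I=I_0\cup I_1$ according to whether a vertex has a neighbour in $U$, shows $|I_0|\ge 2$, and proves through a sequence of re-seeding arguments that $|I_1|=2$ with one of its vertices adjacent to almost all of $U$.  Only after isolating this exceptional vertex does the clique structure on $I\setminus\{v\}$ emerge.  Your plan correctly anticipates that the delicate work is in ``pinning down the bipartite structure exactly,'' but the difficulty appears one step earlier than you place it: you cannot assume $G[B]$ is already a near-clique before doing that work.
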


In particular, Theorem \ref{th:bestOre} implies that $C_5$ is the only graph~$G$ with $\sigma_2(G) = |V(G)| - 1$ and $m(G, 2) > 2$.

Second, we prove a degree sequence condition for $m(G,2) = 2$.  
Let $G$ be a graph with degree sequence $d_1 \leq \cdots \leq d_n$. 
We say that $G$ satisfies \emph{Chv\'atal's condition} if  
\begin{align}
d_i \geq i+1 \quad\text{ or }\quad d_{n-i} \geq n-i, \quad \forall i, 1 \leq i < \tfrac{n}{2}.\label{eqch}
\end{align}   
In \cite{Chv:hamiltonian}, Chv\'{a}tal proved that a graph $G$ of order $n\ge 3$ that satisfies Chv\'{a}tal's condition is hamiltonian. 

Here, we show that, with only a few exceptions, a slightly weaker Chv\'{a}tal-type condition implies that $m(G,2)=2$.  We say that a graph $G$ satisfies the \emph{weak Chv\'{a}tal condition} if 
\begin{align}
d_i \geq i+1\quad\text{ or }\quad d_{n-i} \geq n-i-1,  \quad \forall i, 1 \leq i < \tfrac{n}{2},\label{eqwch}
\end{align} 
and prove the following.  We denote the path on $k$ vertices by $P_k$ and the cycle on $k$ vertices by $C_k$.

\begin{theorem}\label{thm:chvatalsharp}
If $G$ is a graph with degree sequence $d_1 \leq \cdots \leq d_n$ that satisfies the weak Chv\'{a}tal condition \eqref{eqwch}, then either $m(G,2)=2$ or one of the following holds: 
\begin{itemize}
\item $G$ is disconnected,
\item $G$ contains exactly two vertices of degree one and $G \not\in\{P_2,P_3\}$, or
\item $G$ is $C_5$.
\end{itemize}
\end{theorem}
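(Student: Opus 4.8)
The plan is to translate $m(G,2)=2$ into a statement about \emph{blocking sets} and then read off the obstructions from the degree sequence. Call a nonempty $W\subseteq V(G)$ \emph{closed} if every vertex of $W$ has at most one neighbor in $V(G)\setminus W$. A short check shows that $\{u,v\}$ is $2$-contagious precisely when every nonempty closed set contains $u$ or $v$, since the complement of the final active set is a closed set disjoint from the seed. Two consequences guide everything. First, if $G$ has no \emph{proper} nonempty closed set, then the only closed set is $V(G)$ and every pair percolates, so $m(G,2)=2$. Second, if $W$ is closed with $|W|=k$, then each $w\in W$ keeps at least $d(w)-1$ of its neighbors inside $W$, whence $d(w)\le k$; thus $G$ has at least $k$ vertices of degree at most $k$, i.e. $d_k\le k$. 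In particular a proper closed set of size $k<n/2$ makes the first clause of \eqref{eqwch} fail at $i=k$, forcing the second clause $d_{n-k}\ge n-k-1$.

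First I would clear away the degenerate cases. If $G$ is disconnected we are in the first exceptional family, so assume $G$ is connected and hence $\delta(G)\ge1$. A vertex $w$ is a closed singleton exactly when $d(w)\le1$, so every degree-one vertex must belong to any contagious pair; reading \eqref{eqwch} at $i=1$ and $i=2$ shows that three degree-one vertices would force two vertices of degree at least $n-2$, each necessarily adjacent to at least two leaves, which is impossible since a leaf has a unique neighbor. Hence $G$ has at most two vertices of degree one. If it has exactly two, the second exceptional clause applies unless $G\in\{P_2,P_3\}$, and for $P_2,P_3$ one checks $m(G,2)=2$ directly. If it has exactly one, that pendant is forced into the seed and I would pair it with a well-chosen second vertex, reducing to the minimum-degree-two analysis after deleting the pendant. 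This isolates the principal case $\delta(G)\ge2$.

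In the principal case I would show $m(G,2)=2$ unless $G=C_5$. If $G$ has no proper nonempty closed set we are done, so suppose one exists and take a minimal one $W$, which induces a connected subgraph; put $k=|W|$ and $U=V(G)\setminus W$. When $\sigma_2(G)\ge n-2$ I would simply invoke Theorem~\ref{th:bestOre}: it gives $m(G,2)=2$ unless $G$ lies in $\G_0\cup\G_1\cup\G_2\cup\G_3\cup\X$, and I would then verify that, under \eqref{eqwch} with $\delta\ge2$ and $G$ connected, all of $\G_1,\G_2,\G_3$ either violate \eqref{eqwch} or collapse to small cases, while the finite set $\X$ is checked by hand, leaving $C_5$ as the only survivor. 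When $\sigma_2(G)\le n-3$ I would argue structurally: from $d_k\le k$ and \eqref{eqwch} we get $d_{n-k}\ge n-k-1$, so at least $k+1$ vertices have degree at least $n-k-1$. If the strict inequality $n-k-1>k$ holds, these high-degree vertices all lie in $U$, and since at most $k$ edges leave $W$ a pigeonhole count yields a vertex of $U$ nonadjacent to all of $W$; pushing this forces $G$ into the two-block shape of $\G_1,\G_2,\G_3$, which then contradicts \eqref{eqwch}. The only way to escape the contradiction is the boundary $k=(n-1)/2$ (with $n$ odd), where the forced degrees can equal $k$; a direct analysis of this regime is exactly what pins down $C_5$.

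I expect the structural elimination to be the crux. The weak condition supplies only $d_{n-k}\ge n-k-1$, one short of the bound $d_{n-k}\ge n-k$ in Chv\'atal's hamiltonicity theorem, and this single unit of slack is precisely what lets $C_5$ (and the graphs of $\X$) slip through; tracking it demands a careful case analysis of how the at most $k$ cut edges attach to the two blocks, mirroring the definitions of $\G_1,\G_2,\G_3$. A second delicate point is that the existence of closed sets does not by itself defeat percolation---$C_4$ has closed sets yet $m(C_4,2)=2$ because its closed sets admit a $2$-element transversal, whereas $C_5$'s do not---so the argument must actually locate a hitting pair rather than merely detect obstructions, and the boundary regime $k\ge(n-1)/2$ (where \eqref{eqwch} no longer applies at $i=k$) will require the separate, hands-on treatment that ultimately isolates $C_5$.
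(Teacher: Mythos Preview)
Your closed-set reformulation is correct and is exactly dual to the paper's setup: the paper works with a maximum set $I$ that can be activated from two seeds (subject to $I\cap L\neq\emptyset$, where $L$ is the set of vertices of degree at least $(n-1)/2$), and its complement $U$ is precisely a closed set in your sense. Your observation that a closed set of size $k$ forces $d_k\le k$ is the same degree-counting the paper uses throughout.

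The approaches diverge exactly at the point you yourself flag as the crux, and that is where your proposal has a genuine gap. The paper does \emph{not} split on $\sigma_2(G)$ and does \emph{not} invoke Theorem~\ref{th:bestOre}. Instead, after computer verification for $n\le 12$, it proves a chain of size bounds on $|I|$: first $|I|\le(n+1)/2$, then strict inequality, then $|I|<n/2$. With $|I|<n/2$ it shows $|L\cap U|\ge 3$, that $L\cap U$ is a clique with no edges to $I$, and finishes with a common-neighbor count producing more than $|I|$ activated vertices from two seeds. Every step is a concrete ``activate this pair and count'' argument; none appeals to the two-block structure of $\G_1,\G_2,\G_3$.

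Your plan, by contrast, hands the dense case to Theorem~\ref{th:bestOre} (fine, modulo routine checks that the $\G_i$ and $\X$ violate~\eqref{eqwch} when $\delta\ge 2$), but leaves the case $\sigma_2\le n-3$ essentially unargued. The assertion that a proper closed set $W$ of size $k$ together with $d_{n-k}\ge n-k-1$ ``forces $G$ into the two-block shape of $\G_1,\G_2,\G_3$'' is the entire difficulty, not a step: the closed-set condition says only that each $w\in W$ has at most one neighbor outside, nothing about $G[W]$ or $G[V\setminus W]$ being dense, and it does not by itself exhibit a $2$-element transversal of \emph{all} closed sets (which, as you note with the $C_4$ example, is what you actually need). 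Filling this in would amount to reproving the paper's sequence of claims in dual language. Your single-leaf reduction is also not clean: deleting the pendant can drop its neighbor to degree one and need not preserve~\eqref{eqwch}, so ``reducing to the $\delta\ge 2$ analysis after deleting the pendant'' does not go through as stated; the paper instead keeps the lone degree-one vertex in the picture and derives a direct contradiction.
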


Note that the ordinary Chv\'atal condition~\eqref{eqch} rules out the last three cases in Theorem~\ref{thm:chvatalsharp}.

\begin{corollary}\label{cor:chvatalcorollary}
If $G$ is a graph with degree sequence $d_1 \leq \cdots \leq d_n$ that satisfies Chv\'{a}tal's condition~\eqref{eqch}, then $m(G,2)=2$.
\end{corollary}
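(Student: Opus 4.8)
The plan is to deduce the corollary from Theorem~\ref{thm:chvatalsharp}, using that the ordinary Chv\'atal condition \eqref{eqch} is strictly stronger than the weak one \eqref{eqwch}, and then checking that it is incompatible with each of the three listed exceptions. First I would observe that \eqref{eqch} implies \eqref{eqwch}: for each $i$ the second disjunct $d_{n-i}\ge n-i$ immediately yields $d_{n-i}\ge n-i-1$, so every graph satisfying Chv\'atal's condition also satisfies the weak condition. Consequently Theorem~\ref{thm:chvatalsharp} applies and gives $m(G,2)=2$ unless $G$ is disconnected, $G$ has exactly two vertices of degree one with $G\notin\{P_2,P_3\}$, or $G=C_5$. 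It then remains only to rule out these three possibilities.

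For disconnectedness, I would use that \eqref{eqch} forces connectivity when $n\ge 3$: by the Chv\'atal hamiltonicity theorem quoted above, \eqref{eqch} makes $G$ hamiltonian, and a hamiltonian graph on at least three vertices is connected. (If one prefers a self-contained argument, letting the smallest component have order $s\le n/2$ gives $d_s\le s-1$ and $d_n\le n-s-1$, and one checks directly that the instance $i=s$ of \eqref{eqch} fails, with the balanced even case handled by $i=\lfloor n/2\rfloor-1$.) The only disconnected graph not covered by $n\ge 3$ is $\overline{K_2}$ on two vertices, for which $m(G,2)=2$ holds directly.

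For the degree-one family the verification is a short degree-sequence check. If $G$ has two vertices of degree one then $d_1=d_2=1$, so the $i=1$ instance of \eqref{eqch} can hold only through $d_{n-1}\ge n-1$; but this forces two vertices of degree $n-1$, each adjacent to every other vertex and in particular to both degree-one vertices, which is impossible once $n\ge 4$. The only smaller graphs with exactly two degree-one vertices, namely $P_3$ and $P_2\cup K_1$, already violate \eqref{eqch} at $i=1$, while $P_2$ is expressly excluded from this case. Finally, $C_5$ has all degrees equal to $2$, so the instance $i=2$ reads $d_2\ge 3$ or $d_3\ge 3$, both false, whence \eqref{eqch} fails. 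Each exception therefore either contradicts \eqref{eqch} or already has $m(G,2)=2$, which proves the corollary.

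I do not expect a genuine obstacle here, since the corollary is essentially Theorem~\ref{thm:chvatalsharp} with its exceptions pruned; the only point requiring attention is the bookkeeping at small orders $n\le 3$, where $P_2$, $P_3$, and $\overline{K_2}$ sit exactly on the boundary of the three exceptional cases and must be dispatched either by a direct evaluation of $m(G,2)$ or by noting the failure of \eqref{eqch}.
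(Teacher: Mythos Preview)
Your proposal is correct and follows exactly the approach the paper intends: the paper does not give a separate proof of the corollary but simply remarks that the ordinary Chv\'atal condition~\eqref{eqch} rules out the three exceptional cases of Theorem~\ref{thm:chvatalsharp}, which is precisely what you verify in detail. Your case checks (disconnectedness via Chv\'atal's hamiltonicity theorem, the degree-one case via $d_{n-1}\ge n-1$, and $C_5$ via $i=2$) are all sound, and your attention to the small orders $n\le 3$ goes beyond what the paper spells out.
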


Much as Chv\'{a}tal's Theorem implies Ore's Theorem for hamiltonicity, each of Theorems \ref{th:bestOre} and~\ref{thm:chvatalsharp} and Corollary~\ref{cor:chvatalcorollary} implies Theorem~\ref{th:FPROre}.

\subsection{Notation}
Let $G$ be a graph. 
By $V(G)$ we denote the set of vertices of $G$.
Let $U \subseteq V(G)$ and let $v \in V(G)$.
We denote by $G[U]$ the subgraph of $G$ induced by $U$.
The notations $\Delta(G)$ and $\delta(G)$ mean the maximum and minimum degree of $G$, respectively. By $N(v)$ we denote the set of neighbors of~$v$.
We denote by $N_U(v)$ the set of neighbors in $U$, that is, $N(v) \cap U$.
The notation $d(v)$ means the degree of $v$
 and $d_U(v)$ is $|N_U(v)|$.
%We use $d(v)$ to denote the degree of $v$ and $d_U(v)$ to denote $|N_U(v)|$.
%We say that $N_I(v)$ is the neighborhood of a vertex $v$ within the set $I$. $D_U(u)$ is the degree of the vertex $u$ in $U$. Also, the induced subgraph of $U$ is $G[U]$. 
%Let $S \subseteq V(G)$ and $v \in S$.  

\section{Proof of Theorem~\ref{th:bestOre}}

\begin{proof}[Proof of Theorem~\ref{th:bestOre}]
Let $G$ be a graph of order $n$ with $\sigma_2(G)\ge n-2$ that is not in one of the exceptional classes $\X$, $\G_0$, $\G_1$, $\G_2$, or $\G_3$.  Throughout the proof, amongst all subsets of $V(G)$ that can be activated from a starting set of two vertices, let $I$ (for ``infected'') have maximum size,  and let $U = V(G) \setminus I$ denote the set of vertices that remain dormant from this starting set.   
We repeatedly use the following observation that follows from the maximality of $I$.
\begin{observation}\label{oneneighbor}
Each vertex in $U$ has at most~one neighbor in $I$.
\end{observation}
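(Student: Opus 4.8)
The plan is to derive this statement directly from the definition of the $2$-neighbor process together with the maximality of $I$, so the argument should be quite short and require no computation. First I would fix a two-element starting set $A_0 = \{s,t\}$ whose fully activated set is exactly $I$. Such a pair exists: $I$ was chosen, among all subsets of $V(G)$ that can be activated from a starting set of two vertices, to have maximum size, and this maximum is realized by the terminal (closed) activated set obtained by running the process from some pair. In particular $I$ is a fixed point of the process started at $A_0$.

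Next I would argue by contradiction. Suppose some vertex $v \in U$ has at least two neighbors $a, b \in I$. Running the $2$-neighbor process from $A_0 = \{s,t\}$, every vertex of $I$ eventually becomes active, so in particular both $a$ and $b$ are active at the end of this process. Then $v$ has at least $r = 2$ active neighbors, and the percolation rule forces $v$ to become active as well. Hence the set activated from $\{s,t\}$ contains $I \cup \{v\}$, which is strictly larger than $I$ because $v \in U = V(G)\setminus I$. This contradicts the maximality of $|I|$, so no vertex of $U$ can have two neighbors in $I$, which is exactly the claim.

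Since there is no real computation, I do not expect a genuine obstacle here; the only point requiring care is the precise reading of the phrase ``can be activated from a starting set of two vertices,'' namely that $I$ is the complete closure of a pair under the process (a fixed point) rather than merely some subset of such a closure. Once that is pinned down, the observation is immediate. It is worth flagging that this local sparsity of edges between $U$ and $I$ is exactly the structural fact that the rest of the proof will invoke repeatedly, so stating it cleanly up front is the main purpose of the step.
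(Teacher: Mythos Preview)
Your argument is correct and is exactly the reasoning the paper has in mind: the paper simply says the observation ``follows from the maximality of $I$,'' and your contradiction argument spells out precisely why. Your care about $I$ being the full closure of the starting pair is appropriate and is justified by the maximality assumption itself, so there is no gap.
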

Notice that our assumption on $\sigma_2(G)$ implies that $\Delta(G) \geq (n - 2)/2$.  

For $n\le 11$, Nauty \cite{McKay2014} was utilized to generate all graphs with $m(G,2)>2$, which is precisely the set $\X$. 
The program we used is available at \OURURL.
Thus, we may assume that $n\ge 12$ as we proceed.  Further, if $G$ is disconnected, the degree sum condition guarantees that $G$ has exactly two complete components and so $G \in \G_0$, a contradiction. We may therefore assume that $G$ is connected. The following sequence of claims establishes important facts about the size and structure of $I$ and $U$.  

\begin{claim}\label{claim:bigI}
$|I|>\frac{n}{2}$ and $G[U]$ is complete.
\end{claim}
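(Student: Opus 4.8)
The plan is to prove the two assertions separately, establishing $|I|>\tfrac n2$ first and then deducing that $G[U]$ is complete as a short consequence. The completeness is the easy half once the size bound is in hand. Suppose toward a contradiction that some $a,b\in U$ are nonadjacent. By Observation~\ref{oneneighbor} each of $a,b$ has at most one neighbor in $I$, and since $a\not\sim b$ neither is adjacent to the other (nor to itself), so $d_U(a)\le |U|-2$ and hence $d(a)\le(|U|-2)+1=|U|-1$, and likewise $d(b)\le|U|-1$. Thus $d(a)+d(b)\le 2|U|-2$. But $|I|>\tfrac n2$ forces $|U|=n-|I|<\tfrac n2$, so $2|U|-2<n-2\le\sigma_2(G)\le d(a)+d(b)$, a contradiction. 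Hence $G[U]$ is complete, and the entire claim reduces to proving $|I|>\tfrac n2$.

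For the size bound I would first record the degree structure implied by $\sigma_2(G)\ge n-2$. As already noted, $\Delta(G)\ge(n-2)/2$. Moreover any two vertices of degree strictly less than $(n-2)/2$ have degree sum below $n-2$, so they must be adjacent; consequently the set $L$ of such low-degree vertices is a clique, and since each $x\in L$ satisfies $|L|-1\le d(x)<(n-2)/2$ we get $|L|<\tfrac n2$, leaving more than $\tfrac n2$ vertices of degree at least $(n-2)/2$. Because $I$ is defined as the \emph{largest} set activatable from a pair, it suffices to exhibit one starting pair whose activated set exceeds $\tfrac n2$. I would take a vertex $v$ of maximum degree together with a partner $u$ chosen to maximize $\abs{N(u)\cap N(v)}$, allowing $u\not\sim v$ (this flexibility is what handles near-bipartite configurations, where the best seed pair lies on one side). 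The entire common neighborhood $N(u)\cap N(v)$ activates in a single step, producing a large active core around which to build.

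The main work, and the step I expect to be the real obstacle, is showing that this core cascades past $\tfrac n2$ rather than freezing. The general engine is clear: once an active set $S$ is present, every vertex with at least two neighbors in $S$ joins it, so $S$ keeps absorbing the high-degree vertices outside $L$; the danger is a cascade that stalls with $|I|\le\tfrac n2$. The key point is that a stalled configuration, combined with Observation~\ref{oneneighbor} (each dormant vertex sends at most one edge into $I$) and $\sigma_2(G)\ge n-2$, pins down the structure of $G$ very tightly: the dormant set $U$ must be a clique attached to $I$ by at most a handful of edges, which is exactly the ``two cliques joined by one or two edges'' shape of the exceptional families $\G_0,\G_1,\G_2,\G_3$ together with the sporadic graphs of $\X$. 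Since $G$ is assumed to avoid all of these, and $n\ge 12$ rules out the small cases already settled by computer, no such stall can occur, so $|I|>\tfrac n2$. The delicate part of the argument is verifying that a genuinely stalled, nearly-balanced split is \emph{exactly} one of the listed exceptions rather than merely resembling one; this is where careful bookkeeping of the at-most-one cross edges per dormant vertex, together with the degree-sum bound, must be pushed to its conclusion.
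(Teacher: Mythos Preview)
Your deduction of completeness of $G[U]$ from $|I|>n/2$ is fine and is essentially what the paper does (in reverse order). The problem is entirely in your argument for $|I|>n/2$.

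What you have written for that part is not a proof but a plan, and the plan is circular. You propose to seed with a maximum-degree vertex and a well-chosen partner, observe that the cascade might stall at size $\le n/2$, and then say that any stalled configuration ``is exactly the `two cliques joined by one or two edges' shape of the exceptional families $\G_0,\G_1,\G_2,\G_3$ together with the sporadic graphs of~$\X$,'' which $G$ avoids by hypothesis. But that structural characterization of stalled configurations is precisely the \emph{conclusion} of Theorem~\ref{th:bestOre}; it is what the entire chain of claims (of which this is the first) is building toward. You cannot invoke it here. Your final paragraph acknowledges that the ``delicate part'' is still to be done---that delicate part is the whole content.

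The paper's route is short and self-contained, and you are missing its key idea: apply the \emph{easier} Ore result (Theorem~\ref{th:FPROre}) to the induced subgraph $G[U]$. Concretely, one first checks $|I|\ge 4$ by a brief ad~hoc argument (if $G$ has a triangle or a $C_4$ one gets four active vertices immediately; otherwise a max-degree vertex with $\Delta\ge(n-2)/2\ge4$ has an independent neighborhood, and two of its neighbors violate $\sigma_2\ge n-2$). Once $|I|\ge4$, any nonadjacent $u,v\in U$ satisfy
\[
d_U(u)+d_U(v)\ \ge\ (d(u)-1)+(d(v)-1)\ \ge\ n-4\ \ge\ |U|,
\]
so Theorem~\ref{th:FPROre} gives $m(G[U],2)=2$, and maximality of $|I|$ forces $|U|\le|I|$. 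The equality case $|U|=|I|=n/2$ is then dispatched with a single explicit activation using that $G$ is connected and (as one shows along the way) $G[U]$ is a clique or a clique minus a matching. No reference to the exceptional families is needed anywhere in the proof of this claim.
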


\begin{proof} 
First we show that $|I| \geq 4$.
Suppose for a contradiction that $|I| < 4$.  If $G$ contains a copy~$T$ of~$K_3$, then, since $G$ is connected, some vertex $y\in V(T)$ has a neighbor $y'$ in $G-T$.  So, if we initially activate $y'$ and some $x\in V(T) \setminus \{y\}$, then at least 4 vertices are activated.

If $G$ contains a copy of~$C_4$, we can activate the entire cycle starting with either pair of nonadjacent vertices. Hence we may suppose that $G$ contains neither a triangle nor a copy of~$C_4$.  

Let $w$ be a vertex with $d(w)=\Delta(G)\ge (n-2)/2\ge 5$. As $G$ is triangle-free, $N(w)$ is independent. Let $x$ and $y$
%and $z$
be distinct neighbors of $w$.
%and suppose that $d(x)$,~$d(y)\le d(z)$.
As $G$ contains no $C_4$, we have $N(x) \cap N(y) = \{w\}$, which means that $d(x)+d(y) \leq n - 3$,
%$N(x)$, $N(y)$ and $N(z)$ intersect pairwise only in $w$, so that $d(x)+d(y)+d(z) \le n-1$.  This, however, is
a contradiction.  So, we may assume that $|I|\ge 4$.

Next we establish that $|I|\ge |U|$.
It suffices to show that $m(G[U],2)=2$, which would imply that $|U|\le |I|$ since $|I|$ is maximum among all sets activated by two vertices.
To that end, let $u$ and $v$ be nonadjacent vertices in $U$ and recall that every vertex in $U$ has at most~one neighbor in $I$.  Consequently, as $|I|\ge 4$, 
\[
d_U(u)+d_U(v) \ge d(u)-1 + d(v)-1 \ge n-4\ge |U|.
\]
Thus, $G[U]$ satisfies Ore's condition, and so $m(G[U],2) = 2$ by Theorem~\ref{th:FPROre}.  Therefore, $|U|\le |I|$, which implies that the first conclusion of the claim, that $|I| > \frac{n}{2}$, holds unless $|U|=|I|=\frac{n}{2}$. 
 We will deal with the case $|U|=|I|=\frac{n}{2}$ after establishing that $G[U]$ is complete.
 We continue with the assumption that $|I| \geq \frac{n}{2}$.
 
Suppose $G[U]$ is not a complete graph, and let $u$ and $v$ be nonadjacent vertices in $U$. 
Recall that by Observation~\ref{oneneighbor} every vertex in $U$ has at most one neighbor in $I$. Hence $\max(d(u),d(v)) \leq |U|-1$.
Then, as $|U|\le\frac{n}{2}$,
\[
d(u)+d(v)\le 2(|U|-1) \le n-2,
\]
which is a contradiction unless equality holds. If equality holds, then $U$ induces a complete graph on exactly $\frac{n}2$ vertices minus a matching $M$, where every vertex of~$M$ has a neighbor in $I$. Notice that in this case, $G[U]$ percolates from any choice of two vertices in $U$ (since $\frac{n}{2} \geq 6$). Activating two neighbors of a vertex of~$M$ -- one in $I$ and one in $U$ -- results in at least $|U|+1$ activated vertices, a contradiction to $|I|=\frac{n}{2}$ being maximum. Therefore, $G[U]$ must be a complete graph.

We finally return to the case where  $|U|=|I|=\frac{n}{2}$.
Let $v \in I$ have a neighbor $u$ in $U$.  For any $z$ in $U\setminus \{u\}$, 
initially activating $\{v,z\}$ leads to (at least) the activation of $U \cup \{v\}$, contradicting the maximality of $|I|$ and establishing Claim~\ref{claim:bigI}.
\end{proof}

Partition $I$ into sets $I_0$ and $I_1$, where $I_1$ is the set of vertices of~$I$ with at least~one neighbor in $U$, so that vertices in $I_0$ have no neighbors in  $U$.
Since $|I|>|U|$, and no vertex in $U$ has more than one neighbor in $I$, there exists a vertex $w \in I_0$. Let $u\in U$ and observe that
\begin{equation}\label{I0degreebound}
n-2\le d(w)+d(u)\le |I|-1+|U|=n-1.
\end{equation}
%Hence $w$ has at most one non-neighbor in $I$

The bound on $d(w)$ in~\eqref{I0degreebound} has the following useful consequences.

\begin{observation}\label{I0}
Each vertex in $I_0$ has at most~one non-neighbor in $I$.  Furthermore, if any three vertices in $I$ are activated, then all of $I_0$ will be activated in the following step of the percolation.
\end{observation}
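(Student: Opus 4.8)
The plan is to derive both assertions directly from the degree bound recorded in~\eqref{I0degreebound}. First I would pin down the degree of a vertex $w \in I_0$. Since $w$ has no neighbor in $U$, its degree lies entirely inside $I$, so $d(w) = d_I(w) \le |I| - 1$. On the other hand, fix any $u \in U$: because $G[U]$ is complete by Claim~\ref{claim:bigI} and $u$ has at most one neighbor in $I$ by Observation~\ref{oneneighbor}, we have $d(u) \le (|U|-1) + 1 = |U|$. Substituting this upper bound into the lower inequality $n - 2 \le d(w) + d(u)$ from~\eqref{I0degreebound} yields $d(w) \ge n - 2 - |U| = |I| - 2$. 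Thus $w$ is adjacent to at least $|I| - 2$ of the $|I| - 1$ other vertices of $I$, so it misses at most one of them; this establishes the first sentence of the observation.

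For the second sentence I would argue by a short pigeonhole count using the first part. Suppose three vertices $a, b, c \in I$ are active, and let $w \in I_0$ be arbitrary. If $w \in \{a,b,c\}$ then $w$ is already active, so assume $w \notin \{a,b,c\}$. By the first part $w$ has at most one non-neighbor in all of $I$, hence at most one of $a, b, c$ is a non-neighbor of $w$, so $w$ has at least two active neighbors among them. Since the threshold is $r = 2$, the vertex $w$ becomes active in the very next step. As $w$ was arbitrary, every vertex of $I_0$ is activated one step after any three vertices of $I$ become active.

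I do not expect a substantive obstacle here: once~\eqref{I0degreebound} is available, both claims are immediate. The only points requiring care are the degree bookkeeping and the simultaneity of the final activation. Specifically, I would make sure to record that $d(u) \le |U|$ rather than $|U|-1$ (a vertex of $U$ may send one edge into $I$), and that it is exactly the lower bound $n-2 \le d(w)+d(u)$ that converts this into $d(w) \ge |I|-2$. I would also confirm that activating all of $I_0$ in a single step is legitimate, since each such $w$ independently reaches the threshold from the same fixed triple $\{a,b,c\}$, so no ordering assumption is needed.
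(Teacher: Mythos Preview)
Your proposal is correct and matches the paper's approach exactly: the paper simply states that Observation~\ref{I0} is a consequence of the degree bound~\eqref{I0degreebound}, and you have spelled out precisely that deduction, including the pigeonhole step for the second assertion.
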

%so that $G[I_0]$ is a complete graph minus a matching.

\begin{claim}\label{allinU}
Every vertex in $U$ has exactly one neighbor in $I$. 
\end{claim}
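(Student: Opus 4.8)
The plan is to argue by contradiction. By Observation~\ref{oneneighbor} every vertex of $U$ has at most one neighbor in $I$, so it suffices to rule out a vertex $u_0 \in U$ with \emph{no} neighbor in $I$. (If $U=\emptyset$ the claim is vacuous, so assume $U\neq\emptyset$; since $G$ is connected there is then at least one edge between $I$ and $U$, giving a vertex $z\in U$ whose unique neighbor in $I$ is some $v_z\in I_1$.) Assuming such a $u_0$ exists, the goal is to start the percolation from a cleverly chosen pair and activate strictly more than $|I|$ vertices, contradicting the maximality of $I$.

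First I would pin down the structure forced by $u_0$. Each $w\in I_0$ is nonadjacent to $u_0$, since vertices of $I_0$ have no neighbor in $U$. Applying $\sigma_2(G)\ge n-2$ to this nonadjacent pair and using $d(u_0)=|U|-1$ (as $G[U]$ is complete by Claim~\ref{claim:bigI} and $u_0$ has no neighbor in $I$) together with $d(w)\le |I|-1$, the chain~\eqref{I0degreebound} collapses to equalities; hence $d(w)=|I|-1$, so \emph{every} vertex of $I_0$ is adjacent to all of $I$. I would also record that $|I_0|\ge 2$: because $u_0$ contributes no $I$--$U$ edge, Observation~\ref{oneneighbor} bounds the number of $I$--$U$ edges by $|U|-1$, so $|I_1|\le |U|-1$ and $|I_0|=|I|-|I_1|\ge |I|-|U|+1\ge 2$, using $|I|>\tfrac{n}{2}$ from Claim~\ref{claim:bigI}.

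The heart of the argument is a three-step percolation from the starting pair $\{w_1,z\}$, where $w_1,w_2$ are distinct vertices of $I_0$ (these exist, and are distinct from $v_z\in I_1$). I would trace the cascade as follows: $v_z$ activates, since its neighbors $w_1$ and $z$ are both active ($w_1$ is complete to $I$); then $w_2$ activates, being complete to $I$ and hence adjacent to both of the now-active $w_1,v_z$; and finally, with two vertices $w_1,w_2$ active that are each adjacent to all of $I$, every remaining vertex of $I$ has two active neighbors and activates. Thus $I\cup\{z\}$ is activated from $\{w_1,z\}$, a set of size $|I|+1>|I|$, contradicting the maximality of $I$. Notably, this finishes the proof without needing to percolate any further into $U$.

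The main obstacle is ensuring the cascade reaches \emph{all} of $I$ rather than stalling: a single vertex complete to $I$ supplies only one active neighbor to each other vertex, so two such vertices are needed to push every vertex of $I$ over the threshold. This is precisely why the preliminary facts that $|I_0|\ge 2$ and that both chosen vertices are complete to $I$ are the crucial ingredients; once they are secured, the activation steps are immediate. A minor point to verify along the way is that $w_1$, $w_2$, and $v_z$ are genuinely distinct ($v_z\in I_1$ while $w_1,w_2\in I_0$, and $I_0\cap I_1=\emptyset$), so that the three moves are legitimate.
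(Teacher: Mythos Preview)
Your proof is correct and follows essentially the same approach as the paper: both assume a vertex in $U$ with no neighbor in $I$, use the degree-sum condition to force every vertex of $I_0$ to be complete to $I$, observe that $|I_0|\ge 2$, and then percolate from $\{w_1,u\}$ (your $\{w_1,z\}$) to activate $I\cup\{u\}$. The only differences are cosmetic---your variable names for the ``isolated'' and ``matched'' vertices of $U$ are swapped relative to the paper's, and you spell out the three-step cascade in more detail than the paper does.
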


\begin{proof}
By Observation~\ref{oneneighbor}, it suffices to show that each vertex in $U$ has at least one neighbor in $I$.
Suppose otherwise, so that there exists $z \in U$ with no neighbors in $I$.  Thus, by Observation~\ref{oneneighbor},  there are at least two~vertices $w_1$ and $w_2$ in $I_0$.
Since $w_i$ and $z$ are nonadjacent, we get \[n-2 \leq \delta_2(G) \leq  d(w_i) + d(z) \leq |I|-1+|U|-1 = n-2\] for $i \in \{1,2\}$. 
Hence $d(w_1)=d(w_2)=|I|-1$
and $w_1,w_2$ are adjacent to all vertices of $I$.
Let $v \in I$ and $u \in U$ be adjacent vertices. If we initially activate $\{w_1,u\}$, this in turn would activate at least~$I\cup\{u\}$, contradicting the maximality of $|I|$.  Consequently, every vertex in $U$ has a neighbor in $I$, establishing Claim~\ref{allinU}.
\end{proof}
Note that Claim~\ref{allinU} implies that $|U| \geq |I_1|$.

\begin{claim}\label{bigI0}
$|I_0| \geq 2$.
\end{claim}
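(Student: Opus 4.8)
The plan is to argue by contradiction. Since a vertex $w \in I_0$ is already known to exist, it suffices to rule out $|I_0| = 1$, so I would suppose that $I_0 = \{w\}$ is the \emph{only} vertex of $I$ with no neighbor in $U$ and then exhibit a two-element set that activates more than $|I|$ vertices. First I would pin down the structure this forces. By Claim~\ref{allinU} every vertex of $U$ has exactly one neighbor in $I$, and that neighbor lies in $I_1$ (as $w$ has none), so there are exactly $|U|$ edges between $U$ and $I$, all incident to $I_1$. Since every vertex of $I_1$ has at least one neighbor in $U$, this forces $|I_1| \le |U|$, i.e.\ $|I| \le |U|+1$; combined with $|I| > |U|$ from Claim~\ref{claim:bigI} it gives $|I| = |U|+1$, every vertex of $I_1$ has exactly one neighbor in $U$, and the $U$--$I_1$ edges form a perfect matching. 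In particular $n = 2|U|+1$ is odd, so $n \ge 13$ and $|U| \ge 6$, $|I| \ge 7$.

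Next I would show $G[I]$ is nearly complete. Fix $v \in I_1$ with matched partner $u^\ast \in U$. For any $u' \in U \setminus \{u^\ast\}$ the vertices $v$ and $u'$ are nonadjacent, since the unique neighbor of $u'$ in $I$ is its own partner; as $d(u') = |U|$ (its $|U|-1$ clique-neighbors in $U$ plus one match), the hypothesis $\sigma_2(G) \ge n-2$ yields $d(v) \ge (n-3)/2$. Because $v$ has exactly one neighbor in $U$, this means $v$ has at most two non-neighbors inside $I$. By Observation~\ref{I0}, $w$ has at most one non-neighbor in $I$. Hence every vertex of $I$ misses at most two others of $I$, with $w$ missing at most one.

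Finally I would run the percolation from a carefully chosen pair. Choose $u \in U$ whose matched partner $v$ is a neighbor of $w$; this is possible because $w$ is adjacent to all but at most one vertex of $I_1$ and $|U| \ge 6$. Starting from $\{w,u\}$, the partner $v$ is adjacent both to $u$ (its match) and to $w$, so $v$ activates. Now $w$ and $v$ are active inside $I$, and since $w$ misses at most one and $v$ at most two vertices of $I$, all but at most three vertices of $I$ are adjacent to both and activate in one step; each remaining straggler has at most two non-neighbors in $I$, so once $|I|-3 \ge 4$ vertices of $I$ are active it has at least two active neighbors and activates as well. Thus all of $I$ becomes active. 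At that point each remaining $u'' \in U$ has two active neighbors, namely the already-active starting vertex $u$ (a clique-neighbor in $U$) and its own matched partner in $I$, so all of $U$ activates too. Hence $\{w,u\}$ activates all $n$ vertices, contradicting the maximality of $|I| = |U|+1 < n$ and establishing $|I_0| \ge 2$.

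I expect the main obstacle to be the bookkeeping in this final cascade: confirming that the handful of missing edges inside $G[I]$ never stalls the process, and that a valid starting pair $\{w,u\}$ always exists. The three density facts—$w$ missing at most one vertex of $I$, every other vertex missing at most two, and $|I| \ge 7$—are exactly what make each successive wave of activation go through, so the crux is assembling these estimates cleanly rather than any single delicate step.
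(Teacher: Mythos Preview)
Your argument is correct, but it takes a noticeably longer route than the paper's. After reaching the same structural conclusion (that $|I|=|U|+1$ and the $U$--$I_1$ edges form a perfect matching), the paper does not attempt to activate all of $G$. Instead it observes that $I_1$ cannot be independent---otherwise any two of its vertices would have degree sum at most $4<n-2$---so one can pick adjacent $x_1,x_2\in I_1$ with matched partners $u_1,u_2\in U$. Starting from $\{u_1,x_2\}$ immediately activates $x_1$ (via $u_1$ and $x_2$) and $u_2$ (via $x_2$ and $u_1$), and then the clique $U$ falls, yielding $|U|+2=|I|+1$ active vertices, which is already a contradiction. This bypasses your degree computation for generic $v\in I_1$, the lower bound $|I|\ge 7$, and the two-stage ``straggler'' cascade inside $I$. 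Your approach, by contrast, leverages the near-completeness of $G[I]$ directly and percolates through $I$ first and $U$ second; it is more hands-on but ultimately proves more than is needed (full percolation rather than merely beating $|I|$ by one).
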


\begin{proof}
As observed just after the proof of Claim~\ref{claim:bigI}, $I_0$ is non-empty, so suppose for a contradiction that $|I_0| = 1$.  It follows from Claims \ref{claim:bigI} and~\ref{allinU} and $I = I_0 \cup I_1$ that
\[
|I| > |U| \geq |I_1| = |I| - 1,
\]
which is a contradiction unless $|U| = |I| - 1$.

Because $|U| = |I_1|$, Claim~\ref{allinU} implies that there is a perfect matching between $U$ and $I_1$.  Also, $I_1$ cannot be an independent set, or else for all $a$,~$b \in I_1$, 
\[
d(a) + d(b) \leq 2(|I_1| + 1) = 4 < n - 2,
\] a contradiction.  So, let $x_1$ and $x_2$ be adjacent vertices of~$I_1$ and let $u_1$ and $u_2$ be their respective neighbors in $U$.  If we initially activate $\{u_1, x_2\}$, then $x_1$ will also become active.  Furthermore, by Claim~\ref{claim:bigI}, $U$ is a clique, so all of $U$ will become active, for a total of at least~$|U| + 2 =|I| + 1$ active vertices.  This contradiction completes the proof of Claim~\ref{bigI0}.
\end{proof}

\begin{claim}\label{cl:three}

Let $v \in I_1$ have at least two neighbors in $U$ and let $u$ be one such neighbor. Also, let $D$ be a subset of $I$ containing at least three vertices, including $v$, and let $x\in I_1\setminus \{v\}$.  The following hold:

\begin{itemize}
\item[(1)] There is no set of size 2 that activates $U \cup D$;
\item[(2)] $N_I(v)$ is an independent set;
\item[(3)] If there is a vertex $y$ in $N_I(v) \cap I_1$, then $y$ is the only neighbor of $v$ in $I$;
\item[(4)] $v$ and $x$ have no common neighbor;
\item[(5)] $|N_I(v)|=1$,  $I_1 = \{v,x\}$, $x$ has exactly one neighbor in $U$ and $v$ is adjacent to every other vertex of~$U$.
\end{itemize}

\end{claim}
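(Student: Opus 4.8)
The plan is to prove all five parts by contradiction, in the stated order, with part~(1) serving as the engine for parts~(2)--(4). The recurring tool is an \emph{ignition argument} exploiting the hypothesis that $v$ has two neighbors $u,u'\in U$: activating $u$ together with a well-chosen vertex $c\in N_I(v)$ first lights $v$ (it sees $u$ and $c$), then lights $u'$ (it sees $u$ and the now-active $v$), and then—since $G[U]$ is complete by Claim~\ref{claim:bigI} and now contains two active vertices $u,u'$—lights all of $U$.

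For~(1) I would argue directly against the maximality of $I$. If a $2$-set activates $U\cup D$, then since $v\in D\cap I_1$ and $\lvert D\rvert\ge 3$, Observation~\ref{I0} activates all of $I_0$ in the next step, so the activated set contains $U\cup I_0\cup(D\cap I_1)$, of size $\lvert U\rvert+\lvert I_0\rvert+\lvert D\cap I_1\rvert\ge \lvert U\rvert+\lvert I_0\rvert+1$. The key input is $\lvert U\rvert\ge\lvert I_1\rvert$, which holds because Claim~\ref{allinU} makes $u\mapsto$(the unique neighbor of $u$ in $I$) a well-defined surjection $U\to I_1$. Hence the activated set has size at least $\lvert I\rvert+1$, contradicting maximality. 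Parts~(2)--(4) then each supply a third activated vertex of $I$ to contradict~(1). For~(2), if $a,b\in N_I(v)$ were adjacent, ignite with $c=a$; then $b$ sees $a$ and $v$ and activates, so $\{u,a\}$ reaches $U\cup\{v,a,b\}$. For~(3), if $v$ had a neighbor $y\in I_1$ and a \emph{second} neighbor $a\in I$, ignite with $c=a$; once $U$ is active, $y$ sees $v$ and its own neighbor in $U$ and activates, so $\{u,a\}$ reaches $U\cup\{v,a,y\}$. For~(4), a common neighbor of $v$ and $x$ cannot lie in $U$ (Claim~\ref{allinU} permits each vertex of $U$ only one neighbor in $I$), so it is some $c\in I$; igniting with this $c$ activates $U$ and $v$, after which $x$ sees $c$ and its own neighbor in $U$ and activates, giving $U\cup\{v,c,x\}$. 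Each reaches a set $U\cup D'$ with three distinct vertices of $I$, contradicting~(1).

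Part~(5) is where the work concentrates, and I expect it to be the main obstacle. First I would establish $\lvert N_I(v)\rvert=1$. By~(2) the set $N_I(v)$ is independent, and by~(3) if it meets $I_1$ it is a single vertex; otherwise $N_I(v)\subseteq I_0$, and since three pairwise non-adjacent vertices of $I_0$ would give one of them two non-neighbors in $I$, violating Observation~\ref{I0}, we would be left with $N_I(v)=\{a_1,a_2\}\subseteq I_0$, $a_1a_2\notin E(G)$. Igniting with $c=a_1$ activates $U$ and $v$; crucially, $a_1\in I_0$ misses at most one vertex of $I$, so $a_1$ is adjacent to every $x'\in I_1\setminus\{v\}$, and each such $x'$ then sees $a_1$ and its neighbor in $U$ and activates. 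As $\lvert I_1\rvert\ge 2$ (the claim supplies $x\in I_1\setminus\{v\}$), at least three vertices of $I$ become active, so Observation~\ref{I0} activates $I_0$ and hence all of $V(G)$, contradicting maximality. Thus $\lvert N_I(v)\rvert=1$.

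Finally, with $\lvert N_I(v)\rvert=1$ I would pin down the structure. Since $\lvert I_0\rvert\ge 2$ and $v$ has at most one neighbor in $I$, some $a\in I_0$ is non-adjacent to $v$; then $v$ is its unique non-neighbor in $I$, so $d(a)=\lvert I\rvert-2$, and $\sigma_2(G)\ge n-2$ forces $d(v)\ge\lvert U\rvert$, i.e.\ $d_U(v)\ge\lvert U\rvert-1$. Hence $v$ is adjacent to all of $U$ except at most one vertex $u^{\ast}$. By Claim~\ref{allinU}, every vertex of $U$ adjacent to $v$ has $v$ as its unique neighbor in $I$, so any member of $I_1\setminus\{v\}$ can only be the $I$-neighbor of $u^{\ast}$; since $x\in I_1\setminus\{v\}$ exists and requires a neighbor in $U$, this forces $u^{\ast}$ to exist, $I_1=\{v,x\}$, $x$ to have exactly the one neighbor $u^{\ast}$ in $U$, and $v$ to be adjacent to every other vertex of $U$, completing~(5).
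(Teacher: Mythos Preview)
Your argument is correct and, for parts~(1)--(4) and the closing structural analysis of~(5), matches the paper's proof essentially line for line. The one genuine difference is in the first half of~(5), where you bound $\lvert N_I(v)\rvert\le 1$. The paper handles this in two lines: if $w_1,w_2\in N_I(v)$ are distinct, then by~(2),~(3),~(4) they lie in $I_0$, are non-adjacent to each other, and are non-adjacent to $x$; hence $d(w_1)+d(u)\le(\lvert I\rvert-3)+\lvert U\rvert=n-3$, contradicting $\sigma_2(G)\ge n-2$. (Equivalently, $w_1\in I_0$ already has two non-neighbors $w_2,x$ in $I$, violating Observation~\ref{I0} directly.) You instead invoke~(4) nowhere in this step and run a separate ignition argument for the residual case $N_I(v)=\{a_1,a_2\}\subseteq I_0$; this works, but it is more labor than needed---bringing~(4) into play collapses that case immediately. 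A minor presentational point: your first paragraph of~(5) only establishes $\lvert N_I(v)\rvert\le 1$, not $=1$; the lower bound is what falls out of your second paragraph (if $\lvert N_I(v)\rvert=0$ then $d_U(v)=\lvert U\rvert$, leaving no room for $x$'s neighbor $u^{\ast}$), so the logic is sound but the announcement ``Thus $\lvert N_I(v)\rvert=1$'' is premature where you place it.
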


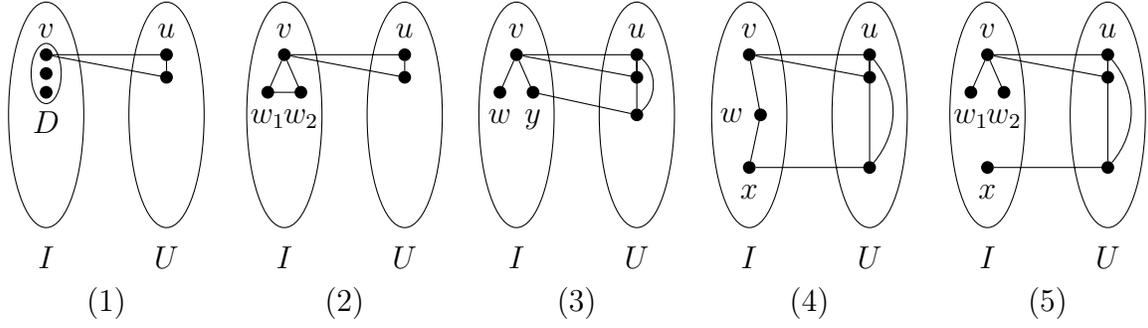
\begin{figure}
\begin{center}
\tikzset{vtx/.style={inner sep=1.7pt, outer sep=0pt, circle,fill},}
\def\xpos{1.6}
\def\hs{\hskip 0.8em}
\begin{tikzpicture}
\draw
(0,0) ellipse (0.5 and 1.5)
(0,0.8) node[vtx,label=above:$v$](v){}
(\xpos,0) ellipse (0.5 and 1.5)
(\xpos,0.8) node[vtx,label=above:$u$](u){}--(v)
(\xpos,0.5) node[vtx,label=above:$ $](up){}--(v)
(u)--(up)
%(0,-0.7) node[vtx,label=below:$x$](x){}
%(2,-0.7) node[vtx,label=above:$ $]{}--(x)
(\xpos,-1.9) node{$U$}
(0,-1.9) node{$I$}
(0,0.3) node[vtx]{}
(0,0.55) node[vtx]{} ellipse(0.2 and 0.4)
(0,-0.1) node{$D$}
(0.5*\xpos,-2.5) node{(1)};
;
\end{tikzpicture}
\hs
\begin{tikzpicture}
\draw
(0,0) ellipse (0.5 and 1.5)
(0,0.8) node[vtx,label=above:$v$](v){}
(\xpos,0) ellipse (0.5 and 1.5)
(\xpos,0.8) node[vtx,label=above:$u$](u){}--(v)
(\xpos,0.5) node[vtx,label=above:$ $](up){}--(v)
(u)--(up)
%(0,-0.7) node[vtx,label=below:$x$](x){}
%(2,-0.7) node[vtx,label=above:$ $]{}--(x)
(\xpos,-1.9) node{$U$}
(0,-1.9) node{$I$}
(v)--(-0.22,0.3) node[vtx,label=below:$w_1$]{}--
(0.22,0.3) node[vtx,label=below:$w_2$]{}--(v)
;
\draw (0.5*\xpos,-2.5) node{(2)};
\end{tikzpicture}
\hs
\begin{tikzpicture}
\draw
(0,0) ellipse (0.5 and 1.5)
(0,0.8) node[vtx,label=above:$v$](v){}
(\xpos,0) ellipse (0.5 and 1.5)
(\xpos,0.8) node[vtx,label=above:$u$](u){}--(v)
(\xpos,0.5) node[vtx,label=above:$ $](up){}--(v)
(u)--(up)
%(0,-0.7) node[vtx,label=below:$x$](x){}
%(2,-0.7) node[vtx,label=above:$ $]{}--(x)
(\xpos,-1.9) node{$U$}
(0,-1.9) node{$I$}
(v)--(-0.22,0.3) node[vtx,label=below:$w$]{}
(0.22,0.3) node[vtx,label=below:$y$](y){}--(v)
(y) -- (\xpos,0) node[vtx](z){}
(u)--(z) (u) to[bend left=50] (z)
;
\draw (0.5*\xpos,-2.5) node{(3)};
\end{tikzpicture}
\hs
\begin{tikzpicture}
\draw
(0,0) ellipse (0.5 and 1.5)
(0,0.8) node[vtx,label=above:$v$](v){}
(\xpos,0) ellipse (0.5 and 1.5)
(\xpos,0.8) node[vtx,label=above:$u$](u){}--(v)
(\xpos,0.5) node[vtx,label=above:$ $](up){}--(v)
(u)--(up)
(0,-0.7) node[vtx,label=below:$x$](x){}
(\xpos,-0.7) node[vtx,label=above:$ $](z){}--(x)
(\xpos,-1.9) node{$U$}
(0,-1.9) node{$I$}
(v)--(0.15,0) node[vtx,label=left:$w$]{}--(x)
(u)--(z) (u) to[bend left=40] (z)
;
\draw (0.5*\xpos,-2.5) node{(4)};
\end{tikzpicture}
\hs
\begin{tikzpicture}
\draw
(0,0) ellipse (0.5 and 1.5)
(0,0.8) node[vtx,label=above:$v$](v){}
(\xpos,0) ellipse (0.5 and 1.5)
(\xpos,0.8) node[vtx,label=above:$u$](u){}--(v)
(\xpos,0.5) node[vtx,label=above:$ $](up){}--(v)
(u)--(up)
(0,-0.7) node[vtx,label=below:$x$](x){}
(\xpos,-0.7) node[vtx,label=above:$ $](z){}--(x)
(\xpos,-1.9) node{$U$}
(0,-1.9) node{$I$}
(v)--(-0.22,0.3) node[vtx,label=below:$w_1$]{} 
(0.22,0.3) node[vtx,label=below:$w_2$]{}--(v)
(u)--(z) (u) to[bend left=40] (z)
;
\draw (0.5*\xpos,-2.5) node{(5)};
\end{tikzpicture}

\end{center}
\caption{The situation in Claim~\ref{cl:three}.}\label{figlemma}
\end{figure}

\begin{proof} 
Before we begin, it is useful to note that if $u$ and $v$ are activated, then so too will be all of~$U$, as $G[U]$ is complete.  Also, the proofs of {\it(1)--(5)} are illustrated in Figure~\ref{figlemma}.\\

\noindent{\it (1):} Suppose that  $U \cup D$ can be activated starting from two vertices $a$ and $b$.
By Observation~\ref{I0}, all of~$I_0$ is activated.
%Let $w\in I_0$.  By Claim~\ref{claim:bigI2orexceptional} and applying the bound on $\sigma_2(G)$ to $w$ and $u$,
%the vertex $w$ can have at most one non-neighbor in $G[I]$.
%%If $w \in I_0$, then by Claim~\ref{claim:bigI2orexceptional} and our assumption on~$\sigma_2(G)$, $w$ has at most~one non-neighbor in $G[I]$.  
%Hence, $w$ has at~least two neighbors in $D$ and so becomes activated as well.
Thus, the set of vertices activated starting with $\{a,b\}$ contains $U \cup I_0 \cup \{v\}$.  As observed above, Claim~\ref{allinU} implies that $|U| \geq |I_1|$.  So, $|U \cup I_0 \cup \{v\}| > |I_0 \cup I_1| = |I|$, a contradiction.\\

\noindent{\it (2):}  Suppose otherwise, and let $w_1$ and $w_2$ be adjacent vertices in $N_I(v)$. Initially activate $\{w_1,u\}$. After the first step, $\{w_1,u,v\}$ are activated. Then all of $\{u,v,w_1,w_2\}\cup U$ is activated, contradicting {\it(1)} with $D=\{v,w_1,w_2\}$.\\   

\noindent{\it (3):}  Assume otherwise, that $v$ has two neighbors $y$ and $w$ in $I$, where $y$ has a neighbor in $U$.  Initially activating $\{w,u\}$ activates $v$ in the first step. Then the other neighbor of $v$ in $U$ and the rest of $U$ are activated in the two steps that follow.  Finally, $y$ is activated, contradicting {\it(1)} with $D=\{v,w,y\}$.\\ 

\noindent{\it (4):}  Let $x$ be in $I_1\setminus \{v\}$, as given, and assume that $w$ is a common neighbor of $v$~and~$x$.  Initially activating $\{w,u\}$ then activates $v$, $x$ and the entirety of $U$ in four iterations, again contradicting {\it(1)}.\\  

\noindent{\it (5):}  Suppose first that $w_1$ and $w_2$ are distinct neighbors of $v$ in $I$.
By {\it(2)}, {\it(3)} and~{\it(4)}, they are nonadjacent, they have no neighbors in $U$, and neither is adjacent to $x$.   Furthermore, because $w_1$,~$w_2 \notin I_1$, both vertices are distinct from $x$.  Then $d(w_1) + d(u) \leq |I| - 3 + |U| = n-3$, a contradiction.
Hence $|N_I(v)| \leq 1$.

%By Claims~\ref{claim:bigI} and \ref{cl:bigI2}, $|I|-1 \geq |U| \geq |I_1|+1$.
Thus, since $|I_0| \geq 2$ by Claim~\ref{bigI0},  there exists $w \in I_0 \setminus N(v)$.
By applying the assumption $\sigma_2(G)\ge n-2$ to the nonadjacent vertices $v$ and $w$,
%and the fact that $w$ cannot be adjacent to both $v$ and $x$
we obtain
\[
|I|-2+d(v)\ge d(w)+d(v)\ge  n-2 = |U|+|I|-2.
\]
Hence, $|U| \leq d(v)$.  On the other hand, $d(v) \leq |U \setminus N(x)|+1 \leq |U|$, so $d(v)=|U|$.  It follows that $v$ is adjacent to all but one vertex of $U$, so that $I_1 = \{v,x\}$ and $x$ has exactly~one neighbor in $U$.  This completes the proof of Claim~\ref{cl:three}.  
\end{proof}

Let $v$ and $x$ be as in the statement of Claim~\ref{cl:three}.  By Claims \ref{claim:bigI} and~\ref{cl:three}{\it(5)}, $U$ is a clique, $x$ has exactly~one neighbor in $U$, and $v$ is adjacent to every other vertex of~$U$.

If $v$ and $x$ are not adjacent, let $z$ be the (only) neighbor of~$v$ in $I$.  By Claim~\ref{cl:three}{\it(5)}, $z \in I_0$, and by Claim~\ref{cl:three}{\it(4)}, $z$ is not adjacent to~$x$.  It follows from Observation~\ref{I0} that all other pairs of vertices in $I_0 \cup \{x\}$ are adjacent (cf.~Figure~\ref{fig:Gi}).  Thus, $G \in \G_2$.

If $v$ and $x$ are adjacent, then Claim~\ref{cl:three}{\it(5)} and Observation~\ref{I0} imply that $I_0 \cup \{x\}$ is a clique.  It follows that $G \in \G_3$.

In either case, we have a contradiction, the final one needed to complete the proof of Theorem~\ref{th:bestOre}.\end{proof}

\section{Proof of Theorem~\ref{thm:chvatalsharp}} %%%%%%%%%%%%%%%%%%%%%%%%%%%%%%%%%%%%%%%%%%%%%%%%

\begin{proof}[Proof of Theorem~\ref{thm:chvatalsharp}]
For $n \leq 12$, Theorem~\ref{thm:chvatalsharp} was verified using Nauty \cite{McKay2014}, so throughout the proof, we may assume that $n \ge 13$. 
The program we used is available at \OURURL.
Suppose then that $G$ is a graph of order $n$ that satisfies the weak Chv\'atal condition~\eqref{eqwch}.  Further, by way of contradiction, suppose that $m(G,2)> 2$ and that $G$ is connected and has at most one vertex of degree~$1$.  Also, let $V(G)=\{v_1,v_2, \ldots, v_n\}$, where $d(v_i)=d_i$ and $d_i\le d_j$ whenever $i\le j$, and let $L$ be the set of vertices of degree at least~$\frac{n-1}{2}$.
%and let $S=V\setminus L$. 

If $G$ satisfies the full Chv\'{a}tal condition~\eqref{eqch}, one of the following must hold:

\begin{itemize}
\item %Type 1:
$d(v)\ge \frac{n}{2}$ for all $v \in L$  and  $|L| > \frac{n}{2}$ or
\item %Type 2:
$d(v)>\frac{n}{2}$ for all $v \in L$ and $|L| \ge \frac{n}{2}$.
\end{itemize}
As one would expect, the weak Chv\'atal condition~\eqref{eqwch} results in slightly weaker conclusions.  However, \eqref{eqwch} still implies that $|L| \geq n/2$.
%For even $n$, $L$ is either Type 1 or
%\begin{itemize}
%\item Type $2^{-}$:  $d(v)\ge \frac{n}{2}$ for all $v \in L$  and  $|L| \geq \frac{n}{2}$.
%\end{itemize}
%
%For odd $n$, $L$ is either Type 2 or
%\begin{itemize}
%\item Type $1^{-}$:  $d(v)\ge \frac{n-1}{2}$ for all $v \in L$  and  $|L| \geq \frac{n+1}{2}$.
%\end{itemize}
Also, notice that if $n$ is even, then $d(v) \ge \frac{n}{2}$ for all $v \in L$, and if $n$ is odd, then $|L| \geq \frac{n+1}{2}$.

Let $I$ have maximum size among sets that can be activated by starting from two activated vertices and that satisfy $I\cap L\ne \emptyset$.
Furthermore, let $U=V\setminus I$, $L_I=L\cap I$ and $L_U=L\cap U$. 
Since we assumed for contradiction. $m(G,2) > 2$, we have $|U| > 0$. Notice that Observation~\ref{oneneighbor}, which says that each vertex in $U$ has at most~one neighbor in $I$, continues to hold.

\begin{claim}\label{claim:smallishI}
$|I| \le \frac{n+1}{2}$.
\end{claim}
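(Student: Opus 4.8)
The plan is to argue by contradiction: suppose $|I| > \frac{n+1}{2}$, so that $|U| = n - |I| < \frac{n-1}{2}$, and note $|U| \ge 1$ since $m(G,2)>2$ forces $I \ne V$. The first move is to record a uniform degree bound on $U$. By Observation~\ref{oneneighbor} every $u \in U$ has $d_I(u) \le 1$, so $d(u) = d_U(u) + d_I(u) \le (|U|-1) + 1 = |U| < \frac{n-1}{2}$. In particular no vertex of $U$ lies in $L$, i.e.\ $L \subseteq I$; moreover at least $|U|$ vertices have degree at most $|U|$, so $d_{|U|} \le |U|$.

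Next I would feed this into the weak Chv\'atal condition~\eqref{eqwch}. Since $1 \le |U| < \frac n2$, condition~\eqref{eqwch} at index $i = |U|$ gives $d_{|U|} \ge |U|+1$ or $d_{n-|U|} \ge n-|U|-1$; the first disjunct fails, so $d_{n-|U|} \ge n - |U| - 1 = |I| - 1$. Hence at least $|U|+1$ vertices have degree at least $|I|-1$; call this set $H$. Each vertex of $H$ has degree exceeding $\frac{n-1}{2}$, so $H \subseteq L \subseteq I$, and each $z \in H$ has at most $(n-1)-(|I|-1) = |U|$ non-neighbors. Because at most $|U|$ edges join $I$ and $U$ (each $u \in U$ satisfies $d_I(u)\le 1$), while $|H| \ge |U|+1$, some $z \in H$ has $d_U(z)=0$; for this $z$ all $|U|$ of its non-neighbors lie in $U$, so $z$ is adjacent to every other vertex of $I$, i.e.\ $z$ is universal in $I$.

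Finally I would contradict the maximality of $I$ by re-seeding the percolation. Choose $u \in U$ with a neighbor $w \in I$ (one exists since $G$ is connected and $U \ne \emptyset$); here $w \ne z$ because $z$ has no neighbor in $U$, and $zw \in E(G)$ since $z$ is universal in $I$. Starting from $\{z,u\}$, the vertex $w$ is a common neighbor of both and thus activates; now $\{z,w\} \subseteq I$ is active, and since $z$ is adjacent to all of $I$ and $G[I]$ is connected (any set activated from two vertices induces a connected subgraph once it has at least three vertices, and $|I|\ge 7$), a straightforward induction activates all of $I$: whenever an active proper subset $S \subseteq I$ has a boundary edge $sq$ with $q \in I\setminus S$, the vertex $q$ is adjacent to $s$ and to the universal $z$, hence activates. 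Together with $u$ this produces an activated set of size at least $|I|+1$ that still meets $L$ (it contains $z$), contradicting the maximality of $I$ and proving the claim. I expect this last re-seeding step to be the delicate point: the starting pair must combine a universal-in-$I$ vertex with a $U$-vertex so that their unique common neighbor ignites all of $I$, and one must verify that the enlarged activated set still intersects $L$, so that it genuinely competes with $I$ in the extremal definition of $I$.
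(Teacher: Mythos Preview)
Your argument is correct and in fact cleaner than the paper's. Both proofs begin identically: assuming $|I|>\frac{n+1}{2}$, both observe $d(u)\le |U|$ for all $u\in U$, feed $i=|U|$ into~\eqref{eqwch}, and extract a set of $|U|+1$ vertices in $I$ of degree at least $|I|-1$ (your $H$, the paper's $X$). From pigeonhole both obtain a vertex $z\in H$ with no neighbour in $U$, hence universal in $G[I]$. The divergence is in how the contradiction is finished. You use the simple but decisive observation that $G[I]$ is connected (since $I$ is the closure of a two-element set and $|I|\ge 3$), so a single universal vertex $z$ together with any neighbour $w\in I$ already percolates to all of $I$; choosing $w$ to have a neighbour $u\in U$ and seeding $\{z,u\}$ then activates $I\cup\{u\}$, and since $z\in L$ this set competes with $I$. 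The paper does not invoke connectivity of $G[I]$ and instead splits into cases: if \emph{two} vertices of $X$ have no neighbour in $U$, the same re-seeding works; otherwise exactly one vertex $v\in X$ has no neighbour in $U$, forcing a perfect matching between $U$ and $X\setminus\{v\}$, and a longer analysis produces two vertices of degree~$1$, contradicting the standing hypothesis. Your route avoids this case split entirely and does not need the degree-$1$ hypothesis at this stage; the price is only the short (and correct) verification that $G[I]$ is connected and that the new activated set still meets $L$.
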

\begin{proof}
Suppose, towards a contradiction, that $|I| > \frac{n+1}{2}$, so that $|U| < \frac{n-1}{2}$. Let $u \in U$ and note that since $u$ has at most one neighbor in $I$, $d(u) \leq |U|$, which implies that $d(v_{|U|}) \leq |U|$.  The weak Chv\'{a}tal condition~\eqref{eqwch} then implies that $d(v_{n-|U|}) \geq n-|U|-1$. 
Let $X = \{v_{n-|U|},\ldots,v_n\}$, so that $d(v) \geq n-|U|-1=|I|-1> |U|$ for all $v \in X$.  Note that $|X|=|U|+1$.

%If $|U| = |I| - 1$, then $n$ is odd, $|I| = \frac{n+1}{2}$ and $|U| = \frac{n-1}{2}$.  In this case \dots \textbf{????}

As $|U| \leq |I| - 2$, we have $n-|U|-1 > |U|$, which means that
%Up to relabeling the vertices, while maintaining the order of their degrees, we may assume that
$X\subseteq I$ since $d(u) \leq |U|$ for all $u \in U$.
Suppose that there are vertices $v$ and $w$ in $X$ with no neighbors in $U$, implying that they have degree equal to $|I|-1$ in $I$.
Because $G$ is connected, there exists $u \in U$ with a neighbor in $I$. Initially activating $\{v,u\}$ then activates $w$
in the second round and consequently activates $I \cup \{u\}$, which contradicts the maximality of~$|I|$.
Thus there is at most one vertex $v \in X$ with no neighbors in $U$.  Indeed, since $|X| = |U|+1$, $v$ is the unique member of $X$ that has no neighbors in $U$
and, by Observation~\ref{oneneighbor}, every $y \in X\setminus\{v\}$ has exactly one neighbor in $U$.  Hence, there is a perfect matching between $U$ and $X\setminus\{v\}$.

Let $w \in X \setminus\{v\}$ and let $u \in U$ be adjacent to $w$.  Initially activating $\{v,u\}$ activates $w$ in the first round, followed by all vertices in $I \cap N(w)$, since they are also adjacent to $v$. By the maximality of $|I|$ and the fact that $|N(w) \cap I| \geq |I|-2$,
there is exactly one vertex~$z \in I \setminus \{w\}$ that is not adjacent to $w$.
Moreover, $z$ must be adjacent only to $v$ in $I$, otherwise $I\cup\{u\}$ would be activated.  

If $z$ had a neighbor in  $U$,  then $z \in X$.
Hence, $2 \geq d(z) \geq |I| - 1$, which implies that $|I| \leq 3$ and thus $n \leq 5$, a contradiction.  Hence, $z$ is a vertex of degree one.  Also, because $z \notin X$, all of~$X$ becomes active.  If $u$ has a neighbor $u'$ in $U$, then $u'$ has a neighbor in $X$, and hence also becomes active since all of $X$ is activated.   This would contradict the maximality of $|I|$. Hence $u$ is also a vertex of degree one, a contradiction to the assumption that $G$ has at most one vertex of degree~one.  This concludes the proof of Claim~\ref{claim:smallishI}.
\end{proof}

\begin{claim}\label{claim:strictodd}
$|I| < \frac{n+1}{2}$.
\end{claim}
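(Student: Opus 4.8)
The plan is to rule out the remaining possibility $|I| = \frac{n+1}{2}$. If $n$ is even this is immediate, since then $\frac{n+1}{2}\notin\Z$, so Claim~\ref{claim:smallishI} already forces $|I|\le \frac n2 < \frac{n+1}{2}$. So I would assume $n$ is odd and $|I| = \frac{n+1}{2}$, so that $|U| = \frac{n-1}{2} = |I|-1$, and recall that for odd $n$ the weak Chv\'atal condition~\eqref{eqwch} gives $|L|\ge \frac{n+1}{2} = |I|$.

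First I would record the local degree facts. By Observation~\ref{oneneighbor} every $u\in U$ satisfies $d(u)\le (|U|-1)+1 = |U|$, with equality exactly when $u$ is adjacent to all of $U\setminus\{u\}$ and has precisely one neighbor in $I$; in particular every vertex of $L_U$ has degree exactly $\frac{n-1}{2}$ and is universal in $U$. Counting the at most $|U|$ edges between $I$ and $U$ against the $|I| = |U|+1$ vertices of $I$ shows, as in the proof of Theorem~\ref{th:bestOre}, that the set $I_0$ of vertices of $I$ with no neighbor in $U$ is nonempty. Then, exactly as in Claim~\ref{claim:smallishI}, since the $|U|$ vertices of $U$ all have degree at most $|U|$ we get $d_{|U|}\le |U|$, whence \eqref{eqwch} forces $d_{n-|U|}\ge n-|U|-1 = |U|$; thus the top $|I|$ vertices $X = \{v_{n-|U|},\dots,v_n\}$ all have degree at least $|U| = |I|-1$ and satisfy $X\subseteq L$.

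The main case is $X\subseteq I$, i.e.\ $X = I$, so that every vertex of $I$ has degree at least $|I|-1$. Here I would follow Claim~\ref{claim:smallishI} closely: a vertex of $I_0$ then has degree exactly $|I|-1$ and so is adjacent to all of $I$, and if two such vertices existed, activating one of them together with a $u\in U$ having a neighbor in $I$ (which exists by connectivity) would activate the second and hence all of $I\cup\{u\}$, contradicting maximality. Thus $|I_0| = 1$, which forces a perfect matching between $U$ and $I_1$ and shows every $u\in U$ has exactly one neighbor in $I$. Taking a matched pair $w\in I_1$, $u\in U$ and activating the unique $I_0$-vertex together with $u$ switches on $w$ and all of $N_I(w)$; since $d(w)\ge |I|-1$ leaves at most one vertex $z\in I$ nonadjacent to $w$, either all of $I\cup\{u\}$ is activated (contradiction) or the surviving $z$ has at most one neighbor in $I$, forcing $d_U(z)\ge |I|-2$, which is incompatible with $z$ lying in either $I_0$ (no $U$-neighbor) or $I_1$ (one $U$-neighbor). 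This contradiction disposes of the case $X = I$.

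It remains to treat the case $X\cap U\ne\emptyset$, which is where the genuine difficulty lies: a vertex $u^*\in X\cap U$ has degree exactly $\frac{n-1}{2}$ and is universal in $U$, so $L_U\ne\emptyset$ and the clean conclusion $X\subseteq I$ fails. Here I would exploit the universal-in-$U$ vertices directly: if $|L_U|\ge 2$, activating two of them activates all of $U$, after which any two vertices of $I$ with at least two neighbors in $U$ would already over-activate $I$, and if at most one such vertex exists one continues the cascade through its neighbors in $I$; if $|L_U| = 1$ one instead starts from the unique $I$-neighbor of $u^*$ together with a second vertex of $U$ to switch on $u^*$ and propagate. The edge-count between $I$ and $U$, together with the hypothesis that $G$ has at most one vertex of degree one, should in every branch produce a starting pair activating more than $|I|$ vertices while meeting $L$, contradicting the maximality of $|I|$. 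I expect this last case, with the degree-$\frac{n-1}{2}$ vertices straddling $I$ and $U$, to be the main obstacle and to demand the most careful case analysis.
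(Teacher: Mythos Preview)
Your treatment of the case $X=I$ is correct and complete. The genuine gap is in the case $X\cap U\neq\emptyset$, which you yourself flag as ``the main obstacle'' and leave as a sketch. The sketch does not go through as written. For instance, in the subcase $|L_U|\ge 2$ you propose activating two vertices of $L_U$, which indeed lights up all of $U$; but if \emph{no} vertex of $I$ has two or more neighbours in $U$ (which is perfectly consistent with everything established so far), the cascade stops at $|U|=|I|-1$ activated vertices and you get no contradiction. Your fallback ``continue the cascade through its neighbours in $I$'' presupposes at least one $I$-vertex with two $U$-neighbours, and even then gives no control on how far the cascade propagates. The $|L_U|=1$ subcase is vaguer still. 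So as it stands the proof is incomplete precisely at the point you identify as hardest.

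The paper handles this differently and avoids the case analysis you anticipate. Rather than applying~\eqref{eqwch} at $i=|U|=\tfrac{n-1}{2}$, it applies it at $i=\tfrac{n-3}{2}$, obtaining a dichotomy: either there is a set $L'$ of size at least $\tfrac{n-1}{2}$ with all degrees at least $\tfrac{n+1}{2}$ (case~(B)), or a set $L'$ of size at least $\tfrac{n-1}{2}+3$ with all degrees at least $\tfrac{n-1}{2}$ (case~(A)). Case~(B) forces $L'\subseteq I$ with every vertex of $L'$ universal in $G[I]$, contradicting a preliminary observation that $G[I]$ has at most one universal vertex. In case~(A) one is guaranteed $|L'\cap U|\ge 2$; after showing every vertex of $I$ has at most one $U$-neighbour, the paper uses a \emph{symmetry swap}: activating $\{u_I,v\}$ (with $u,v\in L'\cap U$ and $u_I$ the $I$-neighbour of $u$) produces the set $U\cup\{u_I\}$ of size exactly $|I|$, so the same structural conclusions apply with the roles of $I$ and $U$ interchanged, forcing $d(u_I)\le 2$ and likewise $d(v_I)\le 2$. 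Two vertices of degree at most $2$ then contradict~\eqref{eqwch} together with the bound $\Delta(G)\le\tfrac{n+1}{2}$. This symmetry argument is the idea your sketch is missing.
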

\begin{proof}
Assume otherwise.  By Claim~\ref{claim:smallishI}, $n$ is odd, $|I| = \frac{n+1}{2}$ and $|U| = \frac{n-1}{2}$. 

First we show that $G[I]$ does not contain two universal vertices. 
Suppose for a contradiction that $x$ and $y$ are two vertices of $I$, each with $|I|-1$ neighbors in $I$. 
By the connectedness of $G$, there exists an edge $zu$, where $z \in I$ and $u \in U$.
By symmetry, assume $x \neq z$.  
Initially activating $\{x,u\}$ activates $z$ in the first step. After the second step, $y$ is activated.
This activates $I \cup \{u\}$, contradicting the maximality of $|I|$.

%Observe that when $n$ is odd, the weak Chv\'atal condition implies there exists $L'$ such that

For $i = (n-3)/2$, the weak Chv\'{a}tal condition \eqref{eqwch} states that either
$d_i \geq i+1 = (n-3)/2+1 = (n-1)/2$ or 
$d_{n-i} \geq n-i-1 = n - (n-3)/2 - 1 =  (n+1)/2$.
This implies there exists $L'$ such that
%If (A) happens, then there are vertices |i,i+1,….,n| =   n - i + 1 =  n - (n-3)/2 + 1 = (n+5)/2 = (n-1)/2 + 3 vertices of degree  (n-1)/2.
%If (B) happens, then there are |n-i,…,n| = i+1 = (n-1)/1 vertices of degree  (n+1)/2
%I guess we should include this justification for (A) or (B) if it confused you. I hope I didn’t mess it up.

\begin{itemize}
\item[(A)] 
$d(v)\ge \frac{n-1}{2}$ for all $v \in L'$ and $|L'| \ge \frac{n-1}{2}+3$ or
\item[(B)] 
$d(v)\ge \frac{n+1}{2}$ for all $v \in L'$  and  $|L'| \ge \frac{n-1}{2}$.
\end{itemize}

If (B) holds,  Observation~\ref{oneneighbor} implies that $L' \subseteq I$. 
Moreover, every vertex in $L'$ must have at least one neighbor in $U$.
Hence $|L'| \leq |U|$ as every vertex in $U$ has at most one neighbor in $L' \subseteq I$. 
Therefore, $|U| = |L'| = \frac{n-1}{2}$.
Since $|U|=|L'|$, every vertex in $|L'|$ has exactly one neighbor in $U$ and thus every vertex in $L'$ is universal in $G[I]$.
Since $|L'| \geq 2$, this contradicts that $G[I]$ has at most~one universal vertex.

Now we assume that (A) holds. 
Since $|I| = \frac{n+1}{2}$ and $|L'| \geq  \frac{n+1}{2}+2$, there are at least two vertices, call them $u$ and $v$, in $L' \cap U$. 
Denote the neighbors of $u$ and $v$ in $I$ by $u_I$ and $v_I$, respectively; we first show that every vertex in $I$ has at most one neighbor in $U$.  Suppose otherwise, so that there exists $x \in I$ with at least two distinct neighbors $a$ and $b$ in $U$.  If $u_I \neq x$, then initially activating $\{u_I,v\}$ first activates $u$, then $U$ and $x$. Eventually, it activates at east $U \cup \{u_I,x\}$, contradicting the maximality of $|I|$.  Hence we may assume that $u_I = v_I = x$.   If $x$ has a single neighbor $y \in I$, then initially activating $\{y,v\}$ eventually activates $U \cup \{y,x\}$, again contradicting the maximality of $|I|$.   Consequently, we may then assume that $x$ has no neighbors in $I$;  this implies, however, that $x$ was in the initially activated set and therefore $|I|=2$ and thus $n\le 3$, a contradiction.

Consider, then, initially activating $\{u_I,v\}$, which activates $U \cup \{u_I\}$.
Notice that $|U \cup \{u_I\}| = |I|$ and $|U \cap L| \geq 2$. 
Hence, we can reuse the arguments from the beginning of the proof of Theorem~\ref{thm:chvatalsharp}, with $U \cup \{u_I\}$ in place of $I$. 
If we end in case (B) with $U \cup \{u_I\}$, we are done.
So, we may assume that (A) applies and then conclude that $U \cup \{u_I\}$ has the same properties as $I$.
%As $|U \cup \{u_I\}| = |I|$,  $U \cup \{u_I\}$ has the same properties as $I$.
In particular, this implies that $u_I$ has at most one neighbor in $I$.
Therefore, $d(u_I) = 2$, and by symmetry, $d(v_I) = 2$.

Since every vertex in $I$ has at most one neighbor in $U$, $\Delta(G) \leq \frac{n+1}{2}$.
%The weak Chv\'atal condition \eqref{eqwch} therefore implies that the initial part of the degree sequence of $G$ is termwise dominated by $2,3,4,\ldots$.  In particular, there is at most one vertex of degree at most 2 in $G$, contradicting the existence of $u_I$ and $v_I$.
The weak Chv\'atal condition \eqref{eqwch} therefore implies that the initial part of the degree sequence of $G$ termwise dominates the sequence~$2$, $3$, $4$,~$\ldots$.  In particular, there is at most~one vertex of degree at most~2 in $G$, contradicting the existence of $u_I$ and $v_I$.
\end{proof}

\begin{claim}\label{claim:strict}
$|I| < \frac{n}{2}$.
\end{claim}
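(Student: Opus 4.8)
The plan is to reduce immediately to the even case and then rule out $|I|=n/2$ by producing, from a cleverly chosen pair of seeds, an activated set of size at least $\frac n2+1$ that still meets $L$, contradicting the maximality of $I$. First, if $n$ is odd then Claim~\ref{claim:strictodd} already gives $|I|\le\frac{n-1}{2}<\frac n2$, so I may assume $n$ is even and, for contradiction, that $|I|=|U|=m:=\frac n2$ (Claim~\ref{claim:strictodd} only leaves $|I|=m$ open when $n$ is even). In this regime every $u\in U$ satisfies $d(u)\le (m-1)+1=m$ by Observation~\ref{oneneighbor}, with equality exactly when $u$ is adjacent to all of $U\setminus\{u\}$ and to a single vertex of $I$; in particular $L_U$ consists of such ``$U$-universal'' vertices. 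Since $n$ is even, $L=\{v:d(v)\ge m\}$, $|L|\ge m$, and $I\cap L\ne\emptyset$. I will also use that the number of edges between $U$ and $I$ is $\sum_{u\in U}d_I(u)\le m$ by Observation~\ref{oneneighbor}, and that $G[I]$ has at most one vertex that is universal in $G[I]$; the latter follows verbatim from the swapping argument in the proof of Claim~\ref{claim:strictodd} (two universal vertices, together with any edge from $I$ to $U$ guaranteed by connectivity, would activate $I\cup\{u\}$, a set of size $m+1$ meeting $L$).

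The key preliminary step is to show $|L_U|\ge 2$. If instead $|L_U|\le 1$, then $|L_I|\ge m-1$; discarding the at most one vertex of $L_I$ that is universal in $G[I]$ leaves at least $m-2$ vertices of $L_I$ that are non-universal in $G[I]$, each of degree $\ge m$ and hence with at least two neighbors in $U$. These alone force the number of $U$--$I$ edges to be at least $2(m-2)>m$ (as $m\ge 7$), a contradiction. Thus there are two $U$-universal vertices $u_1,u_2\in L_U$, and activating $\{u_1,u_2\}$ activates all of $U$ in one round, since every other vertex of $U$ is adjacent to both. Now I split on the $U$--$I$ adjacencies. If some $w_0\in I$ has at least two neighbors in $U$, then once $U$ is fully active $w_0$ activates as well, yielding $U\cup\{w_0\}$ of size $m+1$ (which meets $L$ since $u_1\in L$), the desired contradiction.

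It remains to treat the case in which the edges between $U$ and $I$ form a matching, i.e.\ every vertex of $I$ has at most one neighbor in $U$; this is the main obstacle, because here percolating all of $U$ gains nothing new in $I$. In this case each $w\in L_I$ has $d_U(w)\le 1$ and $d(w)\ge m$, forcing $d_I(w)=m-1$, so $w$ is universal in $G[I]$; by uniqueness, $L_I=\{w^*\}$ and $|L_U|\ge m-1$. Because $m-1$ of the $m$ vertices of $U$ are $U$-universal, the remaining vertex is adjacent to all of them as well, so $G[U]$ is complete. Letting $u^*$ be the unique neighbor of $w^*$ in $U$ and picking any other $u\in U$, I claim that activating $\{w^*,u\}$ first activates $u^*$ (adjacent to both $w^*$ and $u$) and then, since $G[U]$ is complete, all of $U$; together with the seed $w^*\in L$ this is once more a set of size $m+1$ meeting $L$, completing the contradiction. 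The only genuinely delicate points are the counting step that rules out $|L_U|\le 1$ and the bridging activation $\{w^*,u\}$ in the matching case, both sketched above.
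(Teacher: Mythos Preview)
Your argument is correct, but it is organized differently from the paper's and ends up doing more work.

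The paper splits directly on $|L_U|$. When $|L_U|\ge 2$, it picks $u,x\in L_U$ and seeds $\{v,x\}$ with $v\in N_I(u)$; since $u$ is adjacent to both seeds, $u$ activates, then $u$ and $x$ (both $U$-universal) activate all of $U$, yielding $U\cup\{v\}$ in one stroke. When $|L_U|\le 1$, the paper does \emph{not} invoke the ``at most one universal vertex in $G[I]$'' fact; instead it counts edges to show that at least $m-2\ge 2$ vertices of $L_I$ are universal in $G[I]$ and each has a neighbor in $U$, then seeds one such neighbor together with a second universal vertex to activate $I\cup\{u\}$.

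You instead import the ``at most one universal in $G[I]$'' argument from Claim~\ref{claim:strictodd} and combine it with edge-counting to rule out $|L_U|\le 1$ altogether. This is legitimate (that argument uses only connectivity and $I\cap L\ne\emptyset$, so it transfers verbatim), and it is the dual of the paper's count: where the paper deduces many universals and exploits them, you deduce many non-universals and derive an edge overflow. Having forced $|L_U|\ge 2$, you then seed two $L_U$ vertices; because neither seed lies in $I$, you must add a further case split (some $w_0\in I$ with two $U$-neighbors versus the matching case) to push one activated vertex into $I$. The paper avoids this extra split simply by placing one seed in $I$ from the start. Both routes are valid; the paper's seeding choice is what makes its proof shorter.
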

\begin{proof}
Assume otherwise.  By Claim~\ref{claim:strictodd}, $|I| = |U|=\frac{n}{2}$, which implies that $n$ is even.  If $u \in L_U$, then $d(u)\geq \frac{n}{2}$. 
So, by Observation~\ref{oneneighbor}, $u$ is adjacent to all other vertices in $U$ and has exactly one neighbor in $I$.  If  $|L_U| \geq 2$, let $u$,~$x \in L_U$, let $v \in N_I(u)$ and initially activate $\{v,x\}$.  It follows that all of~$U$ is activated by the end of the second round. Because this activates $U\cup\{v\}$, it contradicts the maximality of $I$.  

Suppose, then, that $|L_U| \leq 1$, so that $|L_I| \geq \frac{n}{2}-1$.  As $n$ is even, every vertex in $L$ has degree at least $\frac{n}{2}$.
Hence every vertex in $L_I$ has at least one neighbor in $U$.
Since the number of edges between $I$ and $U$ is at most $\frac{n}{2}$, there is at most one vertex in $L_I$ with more than
one neighbor in $U$ and all the other vertices of $L_I$ are complete to $I$.
%Hence $G[I]$ is either a complete graph or a complete graph minus an edge, and all but at most one vertex in $I$ has a neighbor in $U$.  

Because
%$|I|=\frac{n}{2}\ge 4$,
$|L_I| - 1 \geq \frac{n}{2} - 2 \geq 2$,
there are vertices $v$ and $w$ in $I$ such that each vertex $u$ and $w$ is adjacent to all of~$I$ except for itself.  Let $u \in U$ be the unique neighbor of $v$ in $U$.  Activating $u$ and $w$ results in the activation of $I \cup \{u\}$,  contradicting the maximality of $I$.       
This concludes the proof of Claim~\ref{claim:strict}.
\end{proof}

Let
\begin{equation}\label{pdef}
p= \frac{n}{2}-|I|\ge \frac{1}{2}.
\end{equation}
Notice that $p$ is an integer if $n$ is even. 

 \begin{claim}\label{claim:bigLU}
 $|L_U|\ge 3$.
 \end{claim}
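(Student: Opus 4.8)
The plan is to argue by contradiction: suppose $|L_U| \le 2$. Since $|L_U| = |L| - |L_I|$, and we have already recorded that $|L| \ge n/2$ (indeed $|L| \ge \frac{n+1}{2}$ when $n$ is odd), this forces $|L_I|$ close to its maximum possible value $|I|$; concretely $|L_I| \ge \frac{n}{2} - 2$ when $n$ is even and $|L_I| \ge \frac{n-3}{2}$ when $n$ is odd. The strategy is then to compare two estimates for the number $e(I,U)$ of edges between $I$ and $U$. On one hand, Observation~\ref{oneneighbor} gives $e(I,U) = \sum_{u \in U} d_I(u) \le |U|$. On the other hand, each $w \in L_I$ satisfies $d_U(w) = d(w) - d_I(w) \ge \frac{n-1}{2} - (|I|-1)$, so summing over $L_I$ produces a lower bound that grows with $|L_I|$, and the goal is to make the two bounds incompatible.

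First I would dispose of the even case. Here each $w \in L_I$ has $d(w) \ge \frac{n}{2}$, hence $d_U(w) \ge \frac{n}{2} - (|I|-1) = p+1 \ge 2$, where $p = \frac{n}{2} - |I| \ge 1$ is a positive integer by Claim~\ref{claim:strict}. Summing over $L_I$ gives $e(I,U) \ge |L_I|(p+1) \ge (\frac{n}{2}-2)(p+1)$, and comparing with $e(I,U) \le |U| = \frac{n}{2}+p$ reduces after a short computation to $p(\frac{n}{2}-3) \le 2$, i.e.\ $p \le \frac{4}{n-6} < 1$ for $n \ge 14$, contradicting $p \ge 1$. Thus the even case is settled by pure counting.

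The odd case is the delicate one, since $p$ may be as small as $\frac12$ and the crude bound $d_U(w) \ge 1$ is then too weak. The key extra ingredient is that $G[I]$ contains at most one \emph{universal} vertex (a vertex of $I$ adjacent in $I$ to every other vertex of $I$): if $x$ and $y$ were two such vertices, then choosing an edge $zu$ with $z \in I$, $u \in U$ (which exists as $G$ is connected and $U \ne \emptyset$) and initially activating $\{x,u\}$ would activate $z$, then $y$, and then all of $I \cup \{u\}$, contradicting the maximality of $|I|$; this is exactly the argument already used at the start of the proof of Claim~\ref{claim:strictodd}. Granting this, every \emph{non-universal} $w \in L_I$ has $d_I(w) \le |I|-2$ and $d(w) \ge \frac{n-1}{2}$, hence $d_U(w) \ge \frac{n+3}{2} - |I| = p + \frac32 \ge 2$, and there are at least $|L_I| - 1 \ge \frac{n-5}{2}$ such vertices. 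Therefore $e(I,U) \ge \frac{n-5}{2}\bigl(p+\frac32\bigr)$, while $e(I,U) \le |U| = \frac{n}{2}+p$; simplifying gives $p\cdot\frac{n-7}{2} \le \frac{15-n}{4}$, whose right side is nonpositive for $n \ge 15$ and which forces $p \le \frac16 < \frac12$ when $n = 13$, a contradiction in every case.

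I expect the main obstacle to be precisely this odd boundary behaviour. The two edge estimates are comfortably incompatible for even $n$, and for odd $n$ once $p$ is bounded away from $\frac12$, but at the smallest admissible odd orders (in particular $n = 13$, where $|I| = \frac{n-1}{2}$ is forced) the crude count is tight. It is the structural fact that $G[I]$ has at most one universal vertex, together with the sharper degree bound $d_U(w) \ge p + \frac32$ for non-universal vertices of $L_I$, that separates the bounds in exactly this regime; getting these two refinements to interact correctly is the one place where care is genuinely needed.
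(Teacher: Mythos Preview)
Your argument is correct. The even case is essentially the paper's Case~1 rewritten as an edge-count inequality, and the algebra checks out (for even $n\ge 14$, the inequality $p(\tfrac{n}{2}-3)\le 2$ forces $p<1$). In the odd case your ``at most one universal vertex in $G[I]$'' observation is valid here as well: the argument from the proof of Claim~\ref{claim:strictodd} uses only connectivity, $U\neq\emptyset$, and the maximality of $|I|$ among activatable sets meeting $L$, all of which hold, and since $|L_I|\ge\frac{n-3}{2}>0$ the resulting set $I\cup\{u\}$ still meets $L$. Your strengthened bound $d_U(w)\ge p+\tfrac32$ for non-universal $w\in L_I$ then makes the counting go through uniformly.

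The paper takes a different route for odd $n$. It keeps the case $p>\tfrac12$ (Case~2) as pure counting with the weaker bound $d_U(v)\ge p+\tfrac12$, which already suffices there, and treats the boundary case $p=\tfrac12$ (Case~3) structurally: from $|L_I|\ge\frac{n-1}{2}-1$ and the edge bound $e(I,U)\le\frac{n+1}{2}$ it deduces that $G[I]$ is a clique minus at most two incident edges, then exhibits an explicit two-vertex activation of $I$ plus a vertex of $U$. Your approach trades this endgame analysis for a single reusable lemma (at most one universal vertex in $G[I]$) that sharpens the count just enough to absorb the $p=\tfrac12$ case into the general inequality. This is cleaner and avoids the case split; the paper's version, on the other hand, does not need to import an argument from another claim's proof and makes the extremal structure at $p=\tfrac12$ explicit.
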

 \begin{proof}
If $p \geq 3$, the claim follows from $|L_I| \leq |I| \leq \frac{n}{2}-3$ and $|L_I|+|L_U| \geq \frac{n}{2}$.  So, we let $p < 3$
% We suppose first that $p>\frac{1}{2}$, and consider two cases based on the parity of $n$.
and assume for a contradiction that $|L_U| \leq 2$.
We distinguish the following three cases based on the parity of $n$ and the value of~$p$.\\ %being $\frac{1}{2}$.

\textbf{Case 1:} $n$ is even. %and $p>\frac{1}{2}$.

Since $n$ is even, \eqref{pdef} implies that $p \in \{1,2\}$.  Also, if $v\in L_I$, then $d(v) \geq n/2$. It follows from~\eqref{pdef} that $d_U(v) \geq \frac{n}{2} - (|I| - 1) = p+1$. By summing $d_U$ over all vertices in $L_I$ and recalling that every vertex in $U$ has at most one neighbor in $I$ we get $|U|\ge |L_I|(p+1)$. 
Since $|L_U| \leq 2$, we get $|L_I| \geq |L| - 2 \geq \frac{n}{2}-2$.
Therefore,
\[
\frac{n}{2}-2 \le |L_I|\le \frac{|U|}{p+1}=\frac{n/2+p}{p+1}.
\]
However, as $p \in \{1,2\}$ and thus $n \le 10$, this is a contradiction.\\

\textbf{Case 2:}  $n$ is odd and $p>\frac{1}{2}$.

Since $n$ is odd and $p > \frac12$, for every $v\in L_I$, we have $d_U(v)\ge p+\frac12$. Every vertex in $U$ has at most one neighbor in $I$, so $|U|\ge |L_I|(p+1/2)$. 
Since $|L| \geq \frac{n+1}{2}$, we obtain $|L_I| \geq \frac{n-3}{2}$. 
Therefore,
\[
\frac{n-3}{2} \le |L_I|\le \frac{|U|}{p+1/2}=\frac{n/2+p}{p+1/2}.
\]
However, as $p \in \{\frac{3}{2},\frac{5}{2}\}$ and $n > 9$, this inequality fails, a contradiction.\\

\textbf{Case 3:} $n$ is odd and $p = \frac{1}{2}$.

%Since $p = \frac12$, $n$ is odd.
%It remains to consider the case where $n$ is odd and $p = \frac12$.  
In this case, $|I| = \frac{n-1}{2}$ and, by assumption, $|L_I| \geq |L| - 2 \geq \frac{n-1}{2} - 1$.

Every vertex in $L_I$ has degree at least $\frac{n-1}{2}$,
and therefore must have a neighbor in $U$.  
Since every vertex in $U$ has at most one neighbor in $I$, there
are at most $\frac{n+1}{2}$ edges between $L_I$ and $U$.
Hence there are at most two vertices in $L_I$ with more than one neighbor in $U$.
If there are two such vertices in $L_I$, then both have exactly two neighbors in $U$.
If there is exactly one such vertex in $L_I$, then it has at most three neighbors in $U$.
%Hence there are at most either two vertices in $L_I$ that have two neighbors in $U$
%or one vertex in $L_I$ that has three neighbors in $U$. 

Consequently, $I$ is a complete graph of order at least $5$ except for either a single edge or two incident edges.
 As each vertex in $L_I$ has at least one neighbor in $U$, it is straightforward to select two vertices that, when initially activated, activate all of $I$ and at least~one vertex in $U$, a contradiction.  
 This concludes the proof of Claim~\ref{claim:bigLU}.
\end{proof}

%We will contradict the maximality of $I$ by initially activating two vertices in $L_U$.  The following claim will provide some useful structure.  

\begin{claim}\label{luclique}
$L_U$ is a clique.
\end{claim}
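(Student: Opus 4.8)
The plan is to argue by contradiction. Suppose $L_U$ is not a clique, and fix a pair of nonadjacent vertices $u,v\in L_U$. The strategy is to show that if $u$ and $v$ are initially activated, the percolation reaches a set that is strictly larger than $I$ and still meets $L$ (it contains $u$), contradicting the maximality in the definition of $I$.

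The key step is to bound the number of common neighbors of $u$ and $v$ inside $U$. Since $u,v\in L$, each has degree at least $\frac{n-1}{2}$, and by Observation~\ref{oneneighbor} each has at most one neighbor in $I$; hence $d_U(u),d_U(v)\ge \frac{n-1}{2}-1=\frac{n-3}{2}$, so $d_U(u)+d_U(v)\ge n-3$. Because $u$ and $v$ are nonadjacent, both $N_U(u)$ and $N_U(v)$ lie in $U\setminus\{u,v\}$, a set of size $|U|-2$. Inclusion--exclusion then yields
\[
|N_U(u)\cap N_U(v)| \;\ge\; d_U(u)+d_U(v)-(|U|-2) \;\ge\; (n-3)-(|U|-2) \;=\; n-1-|U| \;=\; |I|-1,
\]
where the final equality uses $|U|=n-|I|$.

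Finally I would run the percolation from $\{u,v\}$: every vertex of $N_U(u)\cap N_U(v)$ is adjacent to both $u$ and $v$ and so is activated in the first step. The resulting active set therefore has size at least $2+(|I|-1)=|I|+1$, and it contains $u\in L$. This is a set activatable from two vertices, meeting $L$, and strictly larger than $I$, contradicting the maximality of $|I|$. Hence no such nonadjacent pair exists and $L_U$ is a clique. There is essentially no difficult step here; the only things to keep in mind are that $|N_U(u)\cap N_U(v)|$ is an integer, so the bound $|I|-1$ holds cleanly irrespective of the parity of $n$, and that the activated set trivially meets $L$ since $u,v\in L_U\subseteq L$.
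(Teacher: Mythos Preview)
Your proof is correct and follows essentially the same approach as the paper: activate two nonadjacent vertices $u,v\in L_U$, use the degree lower bound $\frac{n-1}{2}$ together with Observation~\ref{oneneighbor} to bound $d_U(u)+d_U(v)\ge n-3$, apply inclusion--exclusion in $U\setminus\{u,v\}$ to get at least $|I|-1$ common neighbors, and conclude that at least $|I|+1$ vertices are activated. The paper phrases the same counting via the parameter $p=\frac{n}{2}-|I|$, but the arithmetic is identical; your explicit remark that the activated set meets $L$ is a nice touch that the paper leaves implicit.
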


\begin{proof}
Assume otherwise, and let $u$ and $v$ be nonadjacent vertices in $L_U$. 
We claim that initially activating $u$ and $v$ generates a contradiction to the maximality of $I$.  

As every vertex in $U$ has at most one neighbor in $I$,  both $u$ and $v$ have at least 
$\frac{n-1}{2}-1$ neighbors among the other $\frac{n}{2}+p-2$ vertices of~$U$.
Let $r$ be the number of vertices in $U$ that are common neighbors of $u$~and~$v$.
By counting edges from $u$ and $v$ to the remainder of~$U$, we obtain
\[
2\left( \frac{n-1}{2}-1\right) \leq d_U(u)+d_U(v) \leq 2r + \bigl(|U \setminus\{u,v\} | - r\bigr) \leq 2r + \left(\frac{n}{2}+p-2 - r\right),
\]
which implies that
\[
\frac{n}{2} - p - 1 \leq r.
\]
Together with $\{u,v\}$, a total of $r+2\ge\frac{n}{2} - p +1 = |I| + 1$ vertices are activated by the second round, contradicting the maximality of $|I|$ and proving Claim~\ref{luclique}.
\end{proof}

\begin{claim}\label{cl:almostdone}
Vertices in $L_U$ have no neighbors in $I$.
\end{claim}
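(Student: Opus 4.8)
The plan is to argue by contradiction, assuming that some $u\in L_U$ has a neighbor $w\in I$, and to produce a set that is activatable from two vertices, that meets $L$, and whose size exceeds $|I|$, contradicting the maximality of $I$. First I would collect the structural facts already in hand: by Claim~\ref{luclique} the set $L_U$ is a clique, by Claim~\ref{bigLU} we have $|L_U|\ge 3$, and every vertex of $L_U$ has degree at least $\frac{n-1}{2}$ with at most one neighbor in $I$ by Observation~\ref{oneneighbor}, hence at least $\frac{n-3}{2}$ neighbors inside $U$. Since $|U|=\frac n2+p$ and $|I|=\frac n2-p$ by \eqref{pdef}, each vertex of $L_U$ is non-adjacent to at most $p+\frac12$ of the other vertices of $U$, so the total number of non-edges between $L_U$ and $U\setminus L_U$ is at most $|L_U|\bigl(p+\tfrac12\bigr)$. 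The crucial observation is that any set activated from two vertices of $L_U$ automatically meets $L$, so it is eligible to contradict the maximality of $I$.

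The engine of the proof is to initially activate two vertices of $L_U$. Because $L_U$ is a clique, all of $L_U$ becomes active in the first round. A vertex of $U\setminus L_U$ can then fail to be activated in this round only if it has at most one neighbor in $L_U$, that is, at least $|L_U|-1$ non-neighbors there; dividing the bound $|L_U|\bigl(p+\frac12\bigr)$ on the total number of such non-edges by $|L_U|-1$ shows that only few vertices of $U$ can fail, and iterating the percolation inside $U$ (each newly activated vertex enlarging the active set) fills in almost all of $U$. For $p\ge 2$ this already forces the activated set $S\subseteq U$ to satisfy $|S|>|I|$, since the number of unactivated vertices of $U$ falls below $|U|-|I|=2p$; this contradicts maximality outright, so I may reduce to the small cases $p\in\{\frac12,1,\frac32\}$.

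In the remaining small-$p$ regime the within-$U$ cascade need not by itself overshoot $|I|$, and here I would exploit the crossing edge $uw$ to break into $I$. With a set $S\subseteq U$ containing $u$ and nearly all of $U$ already active, the aim is to activate $w$ (by producing a second active neighbor of $w$, using either a second $U$-neighbor of $w$ lying in $S$, or an initial pair of $L_U$-vertices chosen to be two neighbors of $w$) and then to propagate activation through $I$, so that the final active set contains all of $I$ together with the nonempty $S\subseteq U$ and thus has size at least $|I|+|L_U|>|I|$. The principal obstacle lies exactly here: one must handle a vertex $w\in I$ whose only neighbor in $U$ is $u$, ensure that the activating pair can be chosen so the crossing edge fires, and---most delicately---verify that after crossing, the high degrees of the vertices of $L_I$ indeed propagate activation through all of $I$, so that the combined count provably exceeds $|I|$ for each of $p=\frac12,1,\frac32$.
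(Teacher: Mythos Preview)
Your plan diverges from the paper's proof and leaves a genuine gap in the small-$p$ regime that you yourself flag as ``the principal obstacle''. Concretely: if the only $U$-neighbor of $w\in I$ is $u$, then no initial pair drawn from $L_U$ can ever activate $w$ until some other vertex of $I$ is already active; but you have given no mechanism for activating anything in $I$ from within $U$. Your suggestion of choosing the starting pair to consist of two $U$-neighbors of $w$ is impossible in exactly this case. The subsequent ``propagate through all of $I$'' step is likewise unsubstantiated: nothing in the standing hypotheses forces $I$ to percolate once a single vertex of $I$ is lit.

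The paper sidesteps all of this with one move: put the crossing vertex in the starting pair. Assuming $u\in L_U$ is adjacent to $v\in I$, pick $w,z\in L_U\setminus\{u\}$ (possible by Claim~\ref{claim:bigLU}) and initially activate $\{v,w\}$. Then $u$ activates in round one (it sees $v$ and $w$), and since $L_U$ is a clique, all of $L_U$---in particular $z$---activates in round two. Now $v$ is already among the active vertices, so no crossing argument is needed; one simply counts, as in the proof of Claim~\ref{luclique} but using the three vertices $u,w,z$, how many vertices of $U\setminus\{u,w,z\}$ see at least two of them. Since each of $u,w,z$ has at least $\frac{n-1}{2}-1$ neighbors in $U$ and is adjacent to the other two, the count gives at least $\frac{n}{2}-\frac{p}{2}+\frac14$ active vertices once $v$ is included, which (as an integer) exceeds $|I|=\frac{n}{2}-p$ for every $p\ge\frac12$. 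No case split on $p$ is required.
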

\begin{proof}
Suppose for a contradiction that $v \in I$ and $u \in L_U$ are adjacent. Let $w$ and $z$ be two vertices
in $L_U$ aside from $u$, which exist by Claim \ref{claim:bigLU}. Initially activate the set~$\{v,w\}$. In the first round, $u$ is activated, and in the second round, the remainder of $L_U$, including $z$, is activated. 

By counting edges from $u$, $w$, and $z$ to the remainder of $U$, and letting $r$ denote the number of vertices adjacent to at least two of $u$, $w$ or $z$, we get
\[
3\left( \frac{n-1}{2}-3\right) \leq d_U(u)-2+d_U(z)-2+d_U(w)-2 \leq 3r + \bigl(|U \setminus\{u,v,w\} | - r\bigr) \leq 3r + \left(\frac{n}{2}+p-3 - r\right),
\]
which implies that
\[
%n - p - 7.5 \leq 2r \\
\frac{n}{2} - \frac{p}{2} - \frac{15}{4} \leq r.
\]
When we include $u$, $v$, $w$, and~$z$, we see that there are at least $\frac{n}{2} - \frac{p}{2} + \frac{1}{4} > \frac{n}{2} - p = |I|$ active vertices, which contradicts the maximality of $|I|$
and concludes the proof of Claim~\ref{cl:almostdone}.
\end{proof}

To finish the proof, let $u$, $w$, $z \in L_U$ and initially activate the set $\{z,w\}$, so that $u$ is activated in the first round.
Now we count edges from $\{u,w,z\}$ to $U \setminus \{u,w,z\}$, again letting $r$ denote the number of vertices in $U \setminus \{u,w,z\}$ that are adjacent to at least two vertices in $\{u,w,z\}$.  By Claim~\ref{cl:almostdone},
\[
3\left( \frac{n-1}{2}-2\right) \leq d_U(u)-2+d_U(z)-2+d_U(w)-2 \leq 3r + \bigl(|U \setminus\{u,v,w\} | - r\bigr)\leq 3r + \left(\frac{n}{2}+p-3 - r\right).
\]
The leftmost and rightmost terms in the inequality above imply that
\[
%n - p - 4.5 &\leq 2r \\
\frac{n}{2} - \frac{p}{2} - \frac{9}{4} \leq r.
\]

%Together with $u$, $w$, and $z$, we get at least $\frac{n}{2} - \frac{p}{2} + \frac{1}{4} \geq \frac{n}{2} > |I|$
By including in the count $u$, $w$, and $z$ and by using \eqref{pdef}, we get that the number of activated vertices is at least $r+3 \geq \frac{n}{2} - \frac{p}{2} + \frac{3}{4} \geq \left(\frac{n}{2} - p\right) +\frac{p}{2} + \frac{3}{4}> |I|$, the final contradiction to the maximality of $|I|$. The existence of $I$ was a consequence of $m(G,2) > 2$. Hence $m(G,2) \leq 2$ holds and this finishes the proof of Theorem~\ref{thm:chvatalsharp}.
\end{proof}

\section*{Conclusion}

Theorem \ref{thm:chvatalsharp} gives a sharp degree condition that ensures that a graph $G$ satisfies $m(G,2)=2$, in that it provides a class of graphs that demonstrates the sharpness of the weak Chv\'{a}tal condition for this property.  However, Chv\'{a}tal-type conditions are often shown to be best possible in a different manner, which gives rise to a perhaps challenging open problem related to the work in this paper.  

Let $S=(d_1,\dots, d_n)$ and $S'=(d_1',\dots,d_n')$ be sequences of real numbers.  We say that $S$ \textit{majorizes} $S'$, and write $S\succeq S'$, if $d_i\ge d_i'$ for every $i$.  A sufficient degree condition ${\mathcal C}$ for a graph property ${\mathcal P}$ is \textit{monotone best possible} if whenever ${\mathcal C}$ does not imply that every realization of a degree sequence $\pi$ has property ${\mathcal P}$, there is some graphic sequence $\pi'\succeq \pi$ such $\pi'$ has at least one realization without property ${\mathcal P}$.  Note that it is possible that $\pi'=\pi$.  

The Chv\'{a}tal condition is a monotone best possible degree condition for hamiltonicity~\cite{Chv:hamiltonian}, and~\cite{BK} is a thorough survey of monotone best possible degree criteria for a number of graph properties.  However, it is easy to show that neither the Chv\'{a}tal condition~\eqref{eqch} or the weak Chv\'{a}tal condition~\eqref{eqwch} is monotone best possible for the property $m(G,2)= 2$.  

To see that the Chv\'{a}tal condition is not monotone best possible, consider the graphic sequence $$\pi=(i^i, (n-i-1)^{n-2i},(n-1)^i),$$ where $2\le i<\frac{n}{2}$ and the exponents represent the multiplicities of the terms of $\pi$.  The unique realization of $\pi$ is $K_i\vee(\overline{K_i}\cup K_{n-2i})$.  However, any sequence $\pi'$ such that $\pi'\succeq\pi$ must have at least two vertices of degree $n-1$, implying every realization $G$ of $\pi'$ has $m(G,2)=2.$ If $n$ is even, the sequence $$\pi=(i^i, (n-i-2)^{n-2i},(n-1)^i),$$ suffices to show that the 
weak Chv\'{a}tal condition is also not monotone best possible for the property $m(G,2)=2$.  

This gives rise to the following problem:

\begin{problem}
Determine a monotone best possible degree condition for the property ``$m(G,2)=2$''.  
\end{problem}

\textbf{Acknowledgement:}  The work in this paper was primarily completed during the 2016 Rocky Mountain - Great Plains Graduate Research Workshop in Combinatorics, held at the University of Wyoming in July 2016.  The authors would like to thank Gavin King and Tyrrell McAllister for many fruitful discussions during the workshop and the University of Wyoming for its hospitality.  
We would like to thank David Leach for pointing out an error in a previous version of Figure~\ref{fig:extremal} and anonymous referees for comments that improved the presentation of the paper.

\bibliographystyle{amsplain}
\bibliography{refs}

\end{document}